
\documentclass{amsart}
\usepackage{graphicx}
\usepackage{amsthm}
\usepackage{amsmath}
\usepackage{amssymb}
\usepackage{color}
\vfuzz2pt 
\hfuzz2pt 
\newtheorem{thm}{Theorem}[section]

\theoremstyle{definition}

\theoremstyle{remark}

\newtheorem{theorem}{Theorem}

\newtheorem{lemma}{Lemma}
\newtheorem{proposition}{Proposition}
\newtheorem{definition}{Definition}
\newtheorem{corollary}{Corollary}

\newtheorem{example}{Example}
\newtheorem{problem}[thm]{Problem}
\newcommand{\RR}{\mathbb R}

\newcommand{\Iff}{if and only if}

\numberwithin{equation}{section}


\begin{document}

\title[A note on (weak) phase and norm retrievable
Real Hilbert frames]{A note on (weak) phase and norm retrievable  Real Hilbert space frames and projections}
\author{F. Akrami}
\address{Department of sciences, University of Maragheh, Maragheh, Iran.}
\email{fateh.akrami@gmail.com}
\author{P. G. Casazza}
\address{Department of Mathematics, University of Missouri, Columbia, USA.}
\email{casazzap@missoouri.edu}
\author{A. Rahimi}
\address{Department of sciences, University of Maragheh, Maragheh, Iran.}
\email{rahimi@maragheh.ac.ir}
\author{M. A. Hasankhani Fard}
\address{Department of Mathematics, Vali-e-Asr University of Rafsanjan, Rafsanjan, Iran.}
\email{m.hasankhani@vru.ac.ir}
\author{B. Daraby}
\address{Department of sciences, University of Maragheh, Maragheh, Iran.}
\email{bdaraby@maragheh.ac.ir}
\dedicatory{}

\thanks{The second author was supported by NSF DMS 1609760}

\subjclass[2010]{42C15, 42C40.}

\keywords{Real Hilbert frames, Fusion frame, Finitely full spark, Full spark,
Phase retrieval, Norm retrieval.}

\begin{abstract}
In this manuscript, we answer a list of longstanding open problems on weak phase retrieval including: (1) A complete
classification of the vectors $\{x_i\}_{i=1}^2$ in $\RR^3$ that do weak phase retrieval; (2) We show that frames doing
weak phase retrieval in $\RR^n$ must span $\RR^n$; (3) We give an example of a set of vectors doing phase
retrieval but their orthogonal complement hyperplanes fail weak phase retrieval; (4) We give a classification
of weak phase retrievable frames - which makes clear the difference between phase retrieval and weak phase
retrieval; (5) We classify
when weak phase retrievable frames also do norm retrieval. We then introduce the notion
of weak phase retrieval by projections and develop their basic properties. We then look at phase (norm) retrieval
by projections. We end with some open problems.
 We provide numerous examples to show that our results are best possible.
\end{abstract}

\maketitle

\section{Introduction}\label{s:intro}
The concept of frames in a separable Hilbert space was originally introduced by Duffin and Schaeffer in the context of non-harmonic Fourier series \cite{DS1952}.
Frames are a more flexible tool than bases because of  the redundancy property that make them more applicable than bases.
 Phase retrieval is an old problem of recovering a signal from the absolute value of linear measurement coefficients called intensity measurements.
 Phase retrieval and norm retrieval have become very active areas of research in applied mathematics, computer science, and engineering, and more today. Phase retrieval has been defined for both vectors and subspaces (projections) in a separable Hilbert spaces.
\par
The concept of weak phase retrieval weakened of notion of phase retrieval and it has just been defined for vectors (\cite{ACNT2015} and \cite{ACGJT2016}). In this paper we define the concept of weak phase retrieval by projections and make a detailed
study of weak phase retrieval by vectors and projections. The rest of the paper is organized as follows: In Section 2, we give the basic
definitions and certain preliminary results to be used in the paper. The investigation of weak
phase retrieval by vectors is in section 3 and  in Section 4 we classify weak phase retrievable frames in $\RR^2$. In Section 5, we show that weak phase retrievable frames must span the whole space.  Weak phase retrieval by projections will be introduced in section 6.  In section 7 we classify the existence of weak
phase retrievable frames.
 Some facts about phase (norm) retrieval by projections will be discussed in section 8. Finally, in section 9 we present
 some open problems.

\section{preliminaries} \label{s:pre}
We first give the background material needed for the paper. Let $\mathbb{H}$ be a finite or infinite dimensional Real Hilbert space and B$(\mathbb{H})$ the class of all bounded linear operators defined on $\mathbb{H}$. The natural numbers and real numbers are denoted by $``\mathbb{N}"$ and $``\mathbb{R}"$, respectively. We  use $[m]$ instead of the set $\{1,2,3,\dots,m \}$ and use $[\{x_i\}_{i \in I}]$ instead of $span\{x_i\}_{i \in I}$ where $I$ is a finite or countable subset of $\mathbb{N}$. We denote by $\mathbb{R}^n$ a $n$ dimensional real  Hilbert space.
 We start with the definition of a real Hilbert space frame.
\begin{definition} \label{D:frame}
 A family of vectors $\{x_i\}_{i\in I}$ in a finite or infinite dimensional separable Hilbert space $\mathbb {H}$ is a \textbf{frame} if there are constants $0<A \leq B< \infty $ so that
$$ A\|x\|^2 \leq \sum_{i\in I}{|\langle x,x_i \rangle|^2}\leq B\|x\|^2, \quad \mbox{for all} \quad f\in \mathbb{H}.$$
The constants $A$  and $B$ are called the lower and upper frame bounds for $\{x_i\}_{i\in I}$, respectively. If an upper frame bound exists, then $\{x_i\}_{i\in I}$ is called a {\bf B-Bessel set} or simply
{\bf Bessel} when the constant is implicit. If $A=B$, it is called an {\bf A-tight frame} and
 in case $ A=B=1$, it called a {\bf Parseval~frame}.  The values $\{\langle x,x_i \rangle\}_{i=1}^{\infty}$ are called the frame coefficients of the vector $x \in \mathbb{H}$.
\end{definition}
\begin{definition}\label{D: fusion frame}
A family $\{(W_{i},v_{i}) \}_{i \in I }$ with $W_i$ subspaces of $\mathbb{H}$, $v_i$ weights, and $P_i$ the
projection onto $W_i$, is a\textbf{ fusion frame} for $\mathbb{H}$  if  there exist constants
$A,B > 0$ such that
$$A\|x\|^2 \leq \sum_{i \in I}v_{i}^{2}\|P_i x\|^2 \leq B\|x\|^2, \mbox{ for all } x\in \mathbb{H}.$$ The constants $A$ and $B$ are called the
\textbf{fusion frames bounds}.  We also refer to the fusion frames as $\{P_i,v_i\}_{i\in I}$ or just $\{P_i\}_{i\in I}$ if the weights are all one.
\end{definition}
\begin{definition}\label{D:phase(norm) retrieval by proj}
\textrm A family of projections $\{P_{i}\}_{i\in I}$ in a real Hilbert space $\mathbb {H}$ does \textbf {phase (norm) retrieval} if whenever $ x, y \in \mathbb {H}$, satisfy
$$\|P_ix\|=\|P_iy\| \quad \mbox{ for all } i\in I,$$
then $x=\pm y \;( \|x\|=\|y\|)$. \\ [10pt]
\end{definition}

\begin{definition}\label{D:phase(norm) retrievable fusion frame}
\textrm {A fusion frame} $\{(W_{i},v_{i}) \}_{i \in I }$ \, \textrm{is phase (norm) retrievable for} $\mathbb{H}$ \, \Iff \, the family of projections $\{P_{i} \}_{i\in I}$ is phase (norm) retrievable for
$\mathbb{H}$, where $P_{i}=P_{W_{i}}$ is the orthogonal projection onto $\{W_{i},\}(i \in I)$.
\end{definition}

For more details on fusion frames, we recommend \cite{CK2004}.
It is well known that phase (norm) retrievable sets need not be fusion frames.for example consider
$\{e_i + e_j\}_{i<j}$ that does not satisfy in the frame upper bound condition and therefore is not a frame(fusion frame)for $\ell^2$.
 Also the converse is not true in general. e.g., \cite{AGA2015} let
 $\{W_j\}_{j=1}^{\infty}=span\{e_{2j-1},e_{2j}\}_{j=1}^{\infty}$ be a countable collection subspaces of $\ell^2$ with corresponding orthogonal projections $P_jx= \langle x,e_{2n-1} \rangle e_{2n-1}+ \langle x,e_{2n} \rangle e_{2n}$, and positive weights $\{v_i\}_{i=1}^{\infty}=\{(\frac{1}{2})^j\}_{j=1}^{\infty}$,  then $\{(W_i,v_i)\}_{i=1}^{\infty}$ is a fusion frame for $\ell^2$, but this set is not phase (norm) retrievable.\\
 Improving and extending the notions of phase  and norm  retrievability, we present the definition of phase (norm) retrievable to fusion frames.

We will need to work with Riesz sequences.
\begin{definition} \label{D:Riesz sequence}
A family $\Phi = \{x_i\}_{i \in I}$ in a finite or infinite dimensional Hilbert space $\mathbb{H}$
is a \textbf{Riesz sequence} if there are constants $0<A \leq B< \infty $ satisfying
$$A\sum_{i \in I}|c_i|^2 \leq \|\sum_{i\in I}{c_i x_i}\|^2\leq B\sum_{i \in I}|c_i|^2$$
 for all sequences of scalars $\{c_i\}_{i \in I}$.
If it is complete in $\mathbb{H}$, we call $\Phi$ a \textbf{Riesz basis}.
\end{definition}

\begin{definition}\label{D:phase(norm) retrieval by vectors}
\textrm A family of vectors $\{x_{i}\}_{i\in I}$ in a real Hilbert space $\mathbb {H}$ does \textbf {phase (norm) retrieval} if whenever $ x, y \in \mathbb {H}$, satisfy
$$|\langle x,x_{i}\rangle |=|\langle y,x_{i}\rangle| \quad \mbox{ for all } i\in I,$$
then $x=\pm y \;( \|x\|=\|y\|)$. \\ [10pt]
\end{definition}
Note that if $\{x_i\}_{i\in I}$ does phase (norm) retrieval, then so does $\{a_ix_i\}_{i\in I}$ for any $0< a_i< \infty$ for all $i\in I$.
But in the case where $|I|=\infty$, we have to be careful to maintain frame bounds. This always works if $0<\inf_{i\in I}a_i \le sup_{i\in I}a_i < \infty$.
But this is not necessary in general \cite{ACHR2021}.

 The complement property is an essential here.

\begin{definition}\label{D:complement pro in infinite case}
\textrm A family of vectors $\{x_{k}\}_{k\in I}$ in a finite or infinite dimensional Hilbert space $\mathbb{H}$ has the \textbf{complement property}\, \textrm  if for any subset $I \subset \mathbb{N}$, \\
$$either\ \  \overline{span}\{x_{k}\}_{k\in I}=\mathbb{H} \textrm \quad or \quad \ \  \overline{span}\{x_{k}\}_{k\in I^c}=\mathbb{H}. $$
\end{definition}
Fundamental to this area is the following for which the finite dimensional case appeared in \cite{CCPW13}
and the infinite dimensional case appeared in \cite{CCD16}.
\begin{theorem}\label{T:phase retrievality and complement property for finite case}
A family of vectors $\{x_{i}\}_{i\in I} $ does phase retrieval if and only if it has the complement property.
\end{theorem}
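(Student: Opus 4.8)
The plan is to establish both implications by translating the magnitude condition into a statement about orthogonality and then playing it off against the complement property; the argument is essentially the same in the finite and infinite dimensional cases provided one is careful to use closed spans.

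First I would prove the contrapositive of the ``only if'' direction: if the complement property fails, then phase retrieval fails. Suppose there is a subset $S\subseteq I$ with $\overline{span}\{x_i\}_{i\in S}\neq\HH$ and $\overline{span}\{x_i\}_{i\in S^c}\neq\HH$. Since each of these is a proper closed subspace of a Hilbert space, its orthogonal complement is nontrivial, so I can choose nonzero vectors $u$ and $v$ with $u\perp x_i$ for all $i\in S$ and $v\perp x_i$ for all $i\in S^c$. Setting $x=u+v$ and $y=u-v$, a direct computation shows $\langle x,x_i\rangle=\pm\langle y,x_i\rangle$ for every $i$ (the sign being $-$ on $S$ and $+$ on $S^c$), hence $|\langle x,x_i\rangle|=|\langle y,x_i\rangle|$ for all $i\in I$. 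However $x=\pm y$ would force $v=0$ or $u=0$, contradicting the choice of $u,v$; thus $\{x_i\}_{i\in I}$ does not do phase retrieval.

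For the ``if'' direction, assume the complement property and suppose $x,y\in\HH$ satisfy $|\langle x,x_i\rangle|=|\langle y,x_i\rangle|$ for all $i\in I$. The key step is to partition the index set according to the sign: let $S=\{i\in I:\langle x,x_i\rangle=\langle y,x_i\rangle\}$ and note that for $i\in S^c$ the magnitude equality forces $\langle x,x_i\rangle=-\langle y,x_i\rangle$. Consequently $x-y\perp x_i$ for all $i\in S$ and $x+y\perp x_i$ for all $i\in S^c$, so $x-y\perp\overline{span}\{x_i\}_{i\in S}$ and $x+y\perp\overline{span}\{x_i\}_{i\in S^c}$. Applying the complement property to $S$: in the first alternative $\overline{span}\{x_i\}_{i\in S}=\HH$ gives $x-y=0$, and in the second $\overline{span}\{x_i\}_{i\in S^c}=\HH$ gives $x+y=0$; either way $x=\pm y$.

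I expect the only delicate point to be the bookkeeping at indices where both inner products vanish (such $i$ may be assigned to $S$ without harm, since then $x-y\perp x_i$ holds trivially) and, in the infinite dimensional setting, the passage from orthogonality to each $x_i$ to orthogonality to the closed span, which is exactly where the closure in the definition of the complement property is needed. No deeper obstacle arises; the heart of the proof is the sign partition together with the elementary fact that a proper closed subspace of a Hilbert space has nonzero orthogonal complement.
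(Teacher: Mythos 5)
Your proof is correct, and it is exactly the standard argument: the sign partition $S=\{i:\langle x,x_i\rangle=\langle y,x_i\rangle\}$ for sufficiency, and the construction $x=u+v$, $y=u-v$ from nonzero vectors orthogonal to the two proper closed spans for necessity. Note that the paper itself does not prove this theorem --- it states it as a known result, citing the finite-dimensional case to \cite{CCPW13} and the infinite-dimensional case to \cite{CCD16} --- and your argument is precisely the one found in those references, handled correctly in both settings (including the passage through closed spans and the trivial assignment of indices where both coefficients vanish).
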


We recall

\begin{definition}\label{D:full spark}
\textrm A family of vectors $\{x_{i}\}_{i=1}^{m}$ \textrm in $\mathbb{R}^n \, (m \geq n)$ has {\bf spark k} if for every $I \subset [m] \, \mbox with \, |I|=k-1$ \textrm, $\{x_{i}\}_{i \in I}$ \textrm
is linearly independent. It is full spark if $k=n+1$ and hence every n-element subset spans  $\mathbb{R}^n$.
\end{definition}
\begin{corollary}\label{T:phase retrievality and complement property and full apark for finite case}
\textrm If $\{x_{i}\}_{i=1}^{m} $  \textrm does phase retrieval in $\mathbb {R}^n$, then $m \geq2n-1$.
If $m\ge 2n-1$ and the frame does phase retrieval if and only if it is full spark.
\end{corollary}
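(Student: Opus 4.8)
The plan is to reduce both assertions to the complement property of Definition~\ref{D:complement pro in infinite case}, invoking Theorem~\ref{T:phase retrievality and complement property for finite case} to translate ``does phase retrieval'' into ``has the complement property'' throughout. In finite dimensions a subfamily $\{x_k\}_{k\in I}$ spans $\mathbb{R}^n$ precisely when it contains $n$ linearly independent vectors, so the whole argument becomes a counting argument about how the indices split across a bipartition $[m]=I\cup I^c$, combined with the spark hypothesis.

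For the lower bound I would argue by contraposition. Suppose $m\le 2n-2$. Then $[m]$ can be partitioned into two blocks $I$ and $I^c$, each of cardinality at most $n-1$; since any set of at most $n-1$ vectors spans a subspace of dimension at most $n-1$, neither $\{x_k\}_{k\in I}$ nor $\{x_k\}_{k\in I^c}$ spans $\mathbb{R}^n$. This violates the complement property, so by Theorem~\ref{T:phase retrievality and complement property for finite case} the family cannot do phase retrieval. Hence phase retrieval forces $m\ge 2n-1$.

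For the direction ``full spark $\Rightarrow$ phase retrieval'' I would assume $m\ge 2n-1$ and fix an arbitrary bipartition $[m]=I\cup I^c$. Since $|I|+|I^c|=m\ge 2n-1$, at least one block, say $I$, satisfies $|I|\ge n$. By the full spark hypothesis (Definition~\ref{D:full spark}) every $n$-element subset of $\{x_k\}_{k\in I}$ is linearly independent, so $\{x_k\}_{k\in I}$ spans $\mathbb{R}^n$. As the bipartition was arbitrary, the complement property holds and Theorem~\ref{T:phase retrievality and complement property for finite case} yields phase retrieval.

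For the converse ``phase retrieval $\Rightarrow$ full spark'' I would argue by contradiction. If the family is not full spark, then by Definition~\ref{D:full spark} some $n$-element index set $S$ has $\{x_k\}_{k\in S}$ linearly dependent, hence of rank at most $n-1$, so this subfamily fails to span $\mathbb{R}^n$. Applying the complement property to the bipartition $[m]=S\cup S^c$ would then force $\{x_k\}_{k\in S^c}$ to span $\mathbb{R}^n$, which requires $|S^c|=m-n\ge n$. The main obstacle---and the reason the converse is sharp---lives exactly here: when $m=2n-1$ one has $|S^c|=n-1<n$, so $\{x_k\}_{k\in S^c}$ cannot span and the complement property is contradicted, forcing full spark; for strictly larger $m$ the complement block may itself carry $n$ independent vectors, so a non-full-spark family can still have the complement property. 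I would therefore close by recording that the equivalence is to be read at the threshold $m=2n-1$, where the counting is tight, while for $m>2n-1$ only the implication ``full spark $\Rightarrow$ phase retrieval'' survives.
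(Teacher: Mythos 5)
Your proof is correct and takes exactly the route the paper intends: the corollary is stated without proof as an immediate consequence of Theorem~\ref{T:phase retrievality and complement property for finite case}, and your reduction to the complement property plus the counting over bipartitions $[m]=I\cup I^c$ is the standard way to fill in that step. You are also right to flag that the implication ``phase retrieval $\Rightarrow$ full spark'' holds only at the threshold $m=2n-1$: for $m>2n-1$ one can append a repeated (or even zero) vector to a full spark phase retrievable frame of $2n-1$ vectors, destroying full spark while keeping phase retrieval, so the corollary's wording ``If $m\ge 2n-1$ and the frame does phase retrieval if and only if it is full spark'' is imprecise, and your reading of it --- the equivalence at $m=2n-1$, plus the one-way implication ``full spark $\Rightarrow$ phase retrieval'' for all $m\ge 2n-1$ --- is the correct one.
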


For linearly independent sets there is a special case \cite{CCJW14}.
\begin{theorem}\label{TT}
If $\{x_i\}_{i=1}^n$ in $\RR^n$ does norm retrieval, then the set is orthogonal.
\end{theorem}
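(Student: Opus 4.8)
The plan is to exploit the fact that $n$ vectors doing norm retrieval in $\RR^n$ must form a basis, and then to encode the norm retrieval condition as an invariance property of the Gram matrix under all sign changes of the coordinates.

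First I would show that $\{x_i\}_{i=1}^n$ is linearly independent, hence a basis. If not, the family spans a proper subspace, so there is a nonzero $z$ orthogonal to every $x_i$; then $|\langle z, x_i\rangle| = 0 = |\langle 0, x_i\rangle|$ for all $i$, yet $\|z\| \ne 0 = \|0\|$, contradicting norm retrieval. Consequently the analysis operator $T:\RR^n \to \RR^n$, defined by $Tx = (\langle x, x_i\rangle)_{i=1}^n$, is invertible, and its adjoint is the synthesis operator $T^* e_i = x_i$. A direct computation then gives $TT^* = G$, where $G$ is the Gram matrix $G_{ij} = \langle x_i, x_j\rangle$.

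Next I would translate norm retrieval into operator language. For a sign vector $\epsilon \in \{\pm 1\}^n$ set $D_\epsilon = \mathrm{diag}(\epsilon_1,\dots,\epsilon_n)$. Since $T$ is invertible, for every $y$ the vector $x := T^{-1} D_\epsilon T y$ satisfies $|\langle x, x_i\rangle| = |\langle y, x_i\rangle|$ for all $i$, so norm retrieval forces $\|T^{-1} D_\epsilon T y\| = \|y\|$ for all $y$ and all $\epsilon$; that is, $U_\epsilon := T^{-1} D_\epsilon T$ is orthogonal. Writing out $U_\epsilon^* U_\epsilon = I$ and using $(T^{-1})^* T^{-1} = (TT^*)^{-1} = G^{-1}$ yields the key identity $D_\epsilon G^{-1} D_\epsilon = G^{-1}$ for every sign pattern $\epsilon$.

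Finally I would read off orthogonality. The identity says $\epsilon_i \epsilon_j (G^{-1})_{ij} = (G^{-1})_{ij}$ for all $i,j$; choosing, for each pair $i \ne j$, a pattern with $\epsilon_i = -1$ and $\epsilon_j = 1$ forces $(G^{-1})_{ij} = 0$. Hence $G^{-1}$, and therefore $G$, is diagonal, which is precisely $\langle x_i, x_j\rangle = 0$ for $i \ne j$. The main obstacle is the bookkeeping in the middle step: one must verify that invertibility of $T$ allows every sign pattern to be realized by an honest pair $(x,y)$, and that the adjoint computation correctly identifies $TT^*$ with the Gram matrix; once these are in place the conclusion is immediate.
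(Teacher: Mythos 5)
Your proof is correct. Note that the paper itself offers no proof of Theorem \ref{TT}: it is quoted as a known result from \cite{CCJW14}, so there is no in-paper argument to compare against; your write-up is a valid, self-contained substitute. Each step checks: linear dependence would give a nonzero $z$ with $|\langle z,x_i\rangle|=|\langle 0,x_i\rangle|$ for all $i$ but $\|z\|\neq 0$; for a basis the analysis operator $T$ (rows $x_i^{T}$) is invertible with $T^*e_i=x_i$ and $TT^*=G$; for every sign pattern $\epsilon$ the pair $\left(T^{-1}D_\epsilon Ty,\,y\right)$ has matching coefficient moduli, so norm retrieval forces $U_\epsilon=T^{-1}D_\epsilon T$ to be an isometry, hence orthogonal; and $U_\epsilon^*U_\epsilon=I$ unwinds to $D_\epsilon G^{-1}D_\epsilon=G^{-1}$, which kills every off-diagonal entry of $G^{-1}$ and therefore of $G$.

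Your route differs from the one in the cited source. The standard argument first proves a partition characterization of norm retrieval: $\{x_i\}_{i\in I}$ does norm retrieval if and only if for every $J\subset I$, any $u$ orthogonal to $\{x_i\}_{i\in J}$ and any $v$ orthogonal to $\{x_i\}_{i\in J^c}$ satisfy $\langle u,v\rangle=0$ (this follows from applying norm retrieval to $x=u+v$, $y=u-v$ and polarization). Applying this with $J=[n]\setminus\{j\}$, the line orthogonal to $span\{x_i\}_{i\neq j}$ must be orthogonal to the hyperplane $x_j^{\perp}$, hence equals $[x_j]$, giving $x_j\perp x_i$ for $i\neq j$. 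Your Gram-matrix argument encodes the same information globally as conjugation-invariance of $G^{-1}$ under all sign matrices: it avoids any auxiliary lemma and is a single linear-algebra computation, at the cost of being special to the square ($m=n$) case. The partition characterization, by contrast, is reusable for general families --- indeed the paper's example in Section 8 (the set $\{e_1+e_2,e_2,e_3,e_2+e_3,e_1+e_3\}$ failing norm retrieval in $\mathbb{R}^3$) implicitly invokes exactly that criterion.
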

A similar result in $\ell^2$ in \cite{ACHR2021}.
That is, if a Riesz basis  $\{x_i\}_{i=1}^{\infty}$   does norm retrieval in $\ell^2$, then the vectors $\{x_i\}_{i=1}^{\infty}$ are orthogonal.

It is clear that phase retrieval implies norm retrieval. The converse fails since an orthonormal basis does norm retrieval but fails phase
retrieval since it fails complement property.
Subsets of phase (norm) retrievable frames certainly may fail phase (norm) retrieval, since linearly independent subsets fail the complement property so
fail phase retrieval and
by Theorem \ref{TT} if every subset of a frame does norm retrieval then every two distinct vectors are orthogonal and so the frame is an orthogonal set plus possibly
more vectors.
 But
projections of these sets do still do phase (norm) retrieval \cite{ACHR2021}.

 It is well known that every finite dimensional real Hilbert space  $\mathbb{H}$ \textrm is isomorphic to $\mathbb{R}^n$, for some n, and every separable infinite
dimensional real Hilbert space  $\mathbb{H}$ \textrm is  isomorphic to $\ell^2(\mathbb{R})$ (countable real sequences with $\ell^2$~-norm). We will use $\ell^2$ instead of $\ell^2(\mathbb{R})$ for simplicity.
Throughout the paper, $\{e_i\}_{i=1}^{\infty}$ will be used to denote the canonical basis for the real space $\ell^2$, i.e., a basis for which
$$\langle e_i,e_j \rangle=\delta_{i,j}=
\begin{cases}
1 \quad  if \ i=j,  \\
0 \quad  if \ i\ne j.
\end{cases}$$

\section{Weak phase retrieval by vectors}\label{s:Weak phase retrieval by vectors and projections }
The notion of ``Weak phase retrieval by vectors'' in $\mathbb{R}^n$ was introduced in \cite{ACNT2015} and was
developed further in \cite{ACGJT2016}. For $x\in \RR^n$, $sgn(x)=1$ if $x>0$ and $sgn(x)=-1$ if $x<0$.
\begin{definition}\label{D:Weakly have the same phase}
Two vectors  $x=(a_1,a_2,\dots,a_n)$ and $y=(b_1,b_2,\dots,b_n)$ in $\mathbb{R}^n$  
{\bf weakly have the same phase} if there is a $|\theta|=1$ so that
$phase(a_i)=\theta phase(b_i)$ for all $i \in [n]$, for which $a_i \ne 0 \ne b_i$.\\
If $\theta=1$, we say $x$ and $y$ weakly have the same signs and if $\theta=-1$, they weakly have the opposite signs.
\end{definition}
Therefore with above definition the zero vector in $\RR^n$  weakly has the same phase with all vectors in $\RR^n$.
\begin{definition}\label{D:Weak phase retrieval by vectors}
A family of vectors $\{\phi_i\}_{i=1}^m$ does {\bf weak phase retrieval} in $\mathbb{R}^n$ if for any $x=(a_1,a_2,\dots,a_n)$ and $y=(b_1,b_2,\dots,b_n)$ in $\mathbb{R}^n$ with $|\langle x,\phi_i \rangle |=|\langle y,\phi_i \rangle |$ for all $i \in [m]$, then $x$ and $y$ weakly have the same phase.
\end{definition}
A fundamental result here is
\begin{proposition}  \label{D:pp}\cite{ACNT2015}
Let $x=(a_1,a_2,\dots,a_n)$ and $y=(b_1,b_2,\dots,b_n)$ in $\mathbb{R}^n$. The following are equivalent:\\
(1) We have $sgn(a_i a_j)=sgn(b_i b_j)$,\: for  all $0 \leq i \ne j \leq n$ \\
(2) Either $x,y$ have weakly the same sign or they have the opposite signs.
\end{proposition}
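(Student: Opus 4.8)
The plan is to prove the two implications separately, after first recording the single elementary computation that drives both of them. Write $C=\{i:a_i\neq 0\neq b_i\}$ for the common support, and for $i\in C$ set $\epsilon_i:=sgn(a_i)\,sgn(b_i)\in\{\pm1\}$. For $i\neq j$ in $C$ one has, multiplying through by the squares $sgn(a_j)^2=sgn(b_i)^2=1$,
\[
sgn(a_i a_j)=sgn(b_i b_j)\ \Longleftrightarrow\ sgn(a_i)sgn(a_j)=sgn(b_i)sgn(b_j)\ \Longleftrightarrow\ \epsilon_i=\epsilon_j.
\]
This reduces the whole statement to comparing the one-coordinate quantities $\epsilon_i$, and it is the heart of the argument.

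For $(2)\Rightarrow(1)$, suppose $x$ and $y$ weakly have the same phase with parameter $\theta=\pm1$, so that $sgn(a_i)=\theta\,sgn(b_i)$ for every $i\in C$. Then for any $i\neq j$ with $a_ia_j\neq 0\neq b_ib_j$ (forcing $i,j\in C$) I would simply compute $sgn(a_ia_j)=sgn(a_i)sgn(a_j)=\theta^2\,sgn(b_i)sgn(b_j)=sgn(b_ib_j)$; the point is that the global sign is squared away. This is the short direction.

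For $(1)\Rightarrow(2)$, I would apply condition (1) in its reformulated guise $\epsilon_i=\epsilon_j$ to \emph{every} pair $i,j\in C$. Because the relation holds across all pairs, $\epsilon_i$ must be constant on $C$, equal to some fixed $\theta\in\{\pm1\}$; this $\theta$ is exactly the witness showing that $x$ and $y$ weakly have the same sign (if $\theta=1$) or opposite signs (if $\theta=-1$). When $|C|\le 1$ the conclusion is immediate, since the definition of weak phase only constrains coordinates in the common support (and the zero vector vacuously shares the phase of everything).

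The step I expect to demand the most care is not the sign algebra but the bookkeeping around vanishing coordinates, i.e.\ fixing the convention for $sgn(0)$ and reconciling the supports of $x$ and $y$. Condition (1) does control the supports: if $a_i,a_j\neq 0$ then $sgn(a_ia_j)\neq 0$, which forces $b_ib_j\neq 0$, so any two coordinates on which $x$ is nonzero are also coordinates on which $y$ is nonzero, and symmetrically; one must then dispatch the degenerate cases in which a vector has at most one nonzero coordinate. I would be explicit here that, for the equivalence to hold as stated, the quantifier in (1) must be read over pairs with $a_ia_j\neq 0\neq b_ib_j$ (equivalently, over the common support $C$): under the bare convention $sgn(0)=0$ the implication $(2)\Rightarrow(1)$ can fail, as the pair $x=(1,0),\ y=(1,1)$ shows. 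Once this convention is fixed and the supports are aligned, the constancy argument for $\epsilon_i$ closes the proof.
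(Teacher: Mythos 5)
Your proof is correct. A point of order first: the paper states this proposition without proof, citing \cite{ACNT2015}, so there is no in-paper argument to compare against. Your route—reducing everything to the quantities $\epsilon_i = sgn(a_i)\,sgn(b_i)$ on the common support $C$ and observing that condition (1), read over pairs in $C$, is precisely the statement that $\epsilon_i$ is constant there—is the natural argument for this equivalence, and both implications ($\theta^2=1$ for one direction, constancy of $\epsilon_i$ giving the witness $\theta$ for the other) are carried out correctly, including the degenerate cases $|C|\le 1$. What your write-up adds beyond the bare statement is the explicit treatment of vanishing coordinates: you are right that the paper only defines $sgn$ on nonzero reals, and that under the convention $sgn(0)=0$ the implication $(2)\Rightarrow(1)$ genuinely fails (your example $x=(1,0)$, $y=(1,1)$ works), so for the equivalence to hold the quantifier in (1) must range over pairs with $a_ia_j\neq 0\neq b_ib_j$. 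This is a real imprecision in the statement as printed (note also that the index range $0\le i\ne j\le n$ is a typo for $1\le i\ne j\le n$), and your resolution of it is the correct one.
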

It is clear that if $\{x_i\}_{i=1}^m$ does phase retrieval (respectively, weak phase retrieval) in $\RR^n$ then
$\{c_ix_i\}_{i=1}^n$ does phase retrieval (respectively, weak phase retrieval) as long as $c_i>0$ for all $i=1,2,\ldots,m$.

The following appears in \cite{ACGJT2016} 
\begin{theorem}
If $\{x_i\}_{i=1}^m$ does weak phase retrieval in $\mathbb{R}^n$ then $m \geq 2n-2$.
\end{theorem}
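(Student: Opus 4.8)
The plan is to argue the contrapositive: assuming $m \le 2n-3$, I will produce two vectors with equal magnitude measurements that nonetheless fail to weakly have the same phase, so that weak phase retrieval fails. The engine of the argument is the substitution $u = x+y$, $v = x-y$. Under it the hypothesis $|\langle x,\phi_i\rangle| = |\langle y,\phi_i\rangle|$ becomes $\langle u,\phi_i\rangle\,\langle v,\phi_i\rangle = 0$ for every $i$; that is, for each $i$ at least one of $u,v$ is orthogonal to $\phi_i$. Since $x_k = (u_k+v_k)/2$ and $y_k=(u_k-v_k)/2$, a one-line computation gives $4x_k y_k = u_k^2 - v_k^2$, so the sign of $x_k y_k$ records exactly whether $|u_k|$ exceeds, equals, or falls below $|v_k|$. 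Feeding this into Proposition \ref{D:pp}, I read off that $x$ and $y$ fail to weakly have the same phase precisely when the coordinatewise absolute values $|u|$ and $|v|$ are \emph{incomparable}, i.e. there are indices $j,k$ with $|u_j|>|v_j|$ and $|u_k|<|v_k|$.

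With this dictionary the task reduces to producing incomparable $u,v$ obeying the orthogonality pattern. I fix a partition $[m]=S\sqcup S^c$ and impose $u\perp\phi_i$ for $i\in S$ and $v\perp\phi_i$ for $i\in S^c$; any such pair then automatically satisfies the measurement condition. Choosing $|S|=n-1$ forces $U_S := \{\phi_i : i \in S\}^\perp$ to have dimension at least $1$, so a nonzero $u$ exists; and since $|S^c| = m-(n-1)\le n-2$, the space $V_{S^c} := \{\phi_i : i\in S^c\}^\perp$ has dimension at least $2$. (The degenerate ranges $m\le n-2$ are easier: take $S=[m]$, so that $v$ is entirely unconstrained and the same construction applies.)

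The heart of the proof is to convert the two spare dimensions in $V_{S^c}$ into genuine incomparability against the fixed $u$. I pick a coordinate $j_0$ with $u_{j_0}\ne 0$; because $\dim V_{S^c}\ge 2$, intersecting with the hyperplane $\{v_{j_0}=0\}$ still leaves a nonzero vector $v$, which already yields $|v_{j_0}|=0<|u_{j_0}|$. Rescaling $v$ by a large factor $t$ keeps the zero at $j_0$ while pushing $|tv_{k_0}|$ above $|u_{k_0}|$ at any coordinate $k_0$ where $v_{k_0}\ne 0$, so $u$ and $tv$ are incomparable; setting $x=(u+tv)/2$, $y=(u-tv)/2$ one checks that the four relevant coordinates are nonzero, so Proposition \ref{D:pp} applies verbatim and weak phase retrieval fails. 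I expect the dimension bookkeeping in this last step — extracting a nonzero $v\in V_{S^c}$ that vanishes at a prescribed support coordinate of $u$ — to be the main obstacle, since it is exactly here that the hypothesis $m\le 2n-3$ is spent: at $m=2n-2$ the balanced partition only guarantees $\dim V_{S^c}\ge 1$, and a single direction in $V_{S^c}$ need not vanish at $j_0$, which is precisely why the bound $2n-2$ cannot be improved by this method.
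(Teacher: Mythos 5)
Your proof is correct, but a point of order first: the paper does not actually prove this theorem --- it is stated as a quoted result from \cite{ACGJT2016} (``The following appears in \cite{ACGJT2016}''), so there is no in-paper proof to compare yours against. Judged on its own, your argument is sound and complete. The dictionary is right: $|\langle x,\phi_i\rangle|=|\langle y,\phi_i\rangle|$ holds if and only if $\langle u,\phi_i\rangle\langle v,\phi_i\rangle=0$ for $u=x+y$, $v=x-y$, and since $4x_ky_k=u_k^2-v_k^2$, failure of $x,y$ to weakly share a phase is exactly coordinatewise incomparability of $|u|$ and $|v|$, which is a correct use of Proposition \ref{D:pp} (applied at coordinates where all entries are nonzero, so the sign conventions cause no trouble). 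The counting also checks out: for $m\le 2n-3$, a partition with $|S|=n-1$ leaves $|S^c|\le n-2$, so $\set{\phi_i}_{i\in S}^{\perp}$ contains a nonzero $u$ while $\set{\phi_i}_{i\in S^c}^{\perp}$ has dimension at least $2$, and that spare dimension is precisely what lets you pick $0\ne v$ there vanishing at a prescribed coordinate $j_0$ in the support of $u$; after scaling by $t$, the vectors $x=(u+tv)/2$, $y=(u-tv)/2$ have equal measurements, agree in sign at $j_0$, and disagree in sign at $k_0$, so neither $\theta=1$ nor $\theta=-1$ works. Your handling of the degenerate range $m\le n-2$ (take $S=[m]$) closes the only bookkeeping gap, and your closing remark correctly identifies where $m\le 2n-3$ is spent. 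This complement-property-style construction --- vectors orthogonal to complementary index sets, then sign analysis of $u\pm v$ --- is the same technique this paper itself deploys in the proofs of Theorem \ref{T:X2} (Case 2) and Theorem \ref{T52}, so your proof fits naturally alongside the arguments the authors do include.
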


\section{Classifying Weak Phase Retrievable Frames in $\RR^2$}
\begin{theorem}\label{T10}
A set of vectors $\{x_i\}_{i=1}^2$ does weak phase retrieval in $\RR^2$ if and only if after normalization the two vectors
are of the form $(1,1),\ (1,-1)$ or $(1,b),\ (1,-b)$.
\end{theorem}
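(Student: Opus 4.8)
The plan is to reduce the (a priori infinite) family of constraints defining weak phase retrieval to a single linear-algebraic condition on the $2\times 2$ matrix whose rows are $\phi_1,\phi_2$, and then read off the normal form. First I would dispose of the degenerate case: if $\phi_1,\phi_2$ are linearly dependent they cannot span $\RR^2$ and the pair fails, so assume the $2\times2$ matrix $M$ with rows $\phi_1,\phi_2$ is invertible. The key observation is that the measurement condition $|\langle x,\phi_i\rangle|=|\langle y,\phi_i\rangle|$ for $i=1,2$ says exactly that $My=D\,Mx$ for some diagonal sign matrix $D=\mathrm{diag}(\eps_1,\eps_2)$ with $\eps_i\in\{\pm1\}$; equivalently $y=Rx$ with $R=M^{-1}DM$. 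Since $D=\pm I$ yields $y=\pm x$, which always weakly have the same phase, weak phase retrieval holds if and only if for the single reflection $R=M^{-1}D_0M$, with $D_0=\mathrm{diag}(1,-1)$, every $x$ weakly has the same phase as $Rx$ (the remaining pattern $\mathrm{diag}(-1,1)$ gives $-R$, hence the identical phase requirement).

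Next I would invoke Proposition \ref{D:pp}: in $\RR^2$, writing $x=(a_1,a_2)$ and $Rx=(b_1,b_2)$, the vectors $x$ and $Rx$ weakly have the same phase if and only if $a_1a_2$ and $b_1b_2$ never have strictly opposite signs. Thus weak phase retrieval is equivalent to the inequality $(a_1a_2)(b_1b_2)\ge 0$ holding for all $(a_1,a_2)\in\RR^2$. Both $\alpha(a_1,a_2)=a_1a_2$ and $\beta(a_1,a_2)=b_1b_2$ are quadratic forms in $(a_1,a_2)$ (the latter through the entries of $R$). Because $\alpha$ vanishes on the two coordinate axes while changing sign across each of them, a short continuity argument forces the two pure-square coefficients of $\beta$ to vanish, so that $\beta=c\,\alpha$ for some scalar $c$; the inequality then reduces to $c\ge 0$. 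Computing $R=M^{-1}D_0M$ explicitly, its $(1,1)$ entry is a nonzero multiple of $p_1q_2+p_2q_1$ (where $\phi_i=(p_i,q_i)$), and one checks that the pure-square coefficients of $\beta$ vanish with $c\ge 0$ precisely when $p_1q_2+p_2q_1=0$.

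Finally I would translate $p_1q_2+p_2q_1=0$ into the stated normal form. This condition rules out a zero first coordinate (if $p_1=0$ then $p_2q_1=0$ forces $\phi_2$ onto the same axis, contradicting invertibility), so $p_1,p_2\neq0$; scaling each vector by a nonzero constant, which preserves weak phase retrieval, we may take $p_1=p_2=1$, and then $p_1q_2+p_2q_1=0$ becomes $q_2=-q_1$. Writing $b=q_1$ gives $\phi_1=(1,b),\ \phi_2=(1,-b)$ with $b\neq 0$ (the case $b=1$ being the orthogonal pair $(1,1),(1,-1)$), which is exactly the asserted classification; conversely every such pair satisfies $p_1q_2+p_2q_1=0$ and hence does weak phase retrieval. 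The hard part will be the middle step: one must be sure that \emph{everywhere} sign-compatibility of the two quadratic forms is genuinely forced, rather than just convenient, which is where the behaviour of $\beta$ along the coordinate axes does the real work and collapses the problem to the single scalar condition $p_1q_2+p_2q_1=0$ on $M$.
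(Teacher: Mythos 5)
Your proof is correct, but it takes a genuinely different route from the paper's. The paper proves Theorem \ref{T10} by exhausting the sign patterns of the coordinates in three lemmas (Lemmas \ref{T:X5}, \ref{T:X6} and \ref{L3}): in each failing configuration it builds an explicit pair $x,y$ with equal measurements and incompatible signs, and in the surviving configuration it verifies weak phase retrieval by solving the two linear systems by hand. You instead encode measurement equality as $My=DMx$ for a sign matrix $D$, reduce to the single conjugated reflection $R=M^{-1}\mathrm{diag}(1,-1)M$, and turn weak phase retrieval into everywhere-nonnegativity of $\alpha(x)\beta(x)$, where $\alpha(x)=x_1x_2$ and $\beta(x)=(Rx)_1(Rx)_2$; the sign-change argument across the coordinate axes then forces $\beta=c\,\alpha$ with $c\ge 0$. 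This all checks out: with $\phi_i=(p_i,q_i)$ one computes $R=\frac{1}{\det M}\left(\begin{smallmatrix} p_1q_2+p_2q_1 & 2q_1q_2\\ -2p_1p_2 & -(p_1q_2+p_2q_1)\end{smallmatrix}\right)$, so the pure-square coefficients of $\beta$ vanish iff $p_1q_2+p_2q_1=0$ \emph{or} $p_1p_2=q_1q_2=0$; in the first case $c=4(p_2q_1)^2/(\det M)^2\ge 0$, while in the second (e.g.\ the standard basis) $c=-(p_1q_2+p_2q_1)^2/(\det M)^2<0$, so your criterion is exactly right --- but when writing this up, display that second branch explicitly, since it is the one place where vanishing pure squares does not yet give weak phase retrieval. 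Your approach buys a case-free, invariant characterization (the permanent of $M$ vanishes), while the paper's buys elementary, explicit counterexamples that it reuses elsewhere. Two small points to tighten: your dismissal of the linearly dependent case silently invokes the fact that non-spanning families fail weak phase retrieval, which in this paper is Theorem \ref{T:X2} (proved later, independently of Theorem \ref{T10}, so no circularity) --- cite it, or note the one-line substitute that a line $\{y:\langle y,\phi\rangle=c\}$ with $c\ne 0$ always contains points with $y_1y_2>0$ and points with $y_1y_2<0$; and since Proposition \ref{D:pp} is ambiguous at zero coordinates, verify your reformulation ``weakly same phase iff $(a_1a_2)(b_1b_2)\ge 0$'' directly from Definition \ref{D:Weakly have the same phase}, which is immediate because a zero coordinate imposes no constraint there.
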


 We will prove the theorem in a series of 3 lemmas.

 \begin{lemma}\label{T:X5}
 Sets $\{(a,0),\ (b,c)\}$ and $\{(0,b),\ (c,d)\}$ with $a,b,c,d\not= 0$ fail weak phase retrieval but they are full spark.
   \end{lemma}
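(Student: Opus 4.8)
The full spark assertion I would dispose of first, since it is immediate: a two-element set in $\RR^2$ is full spark exactly when the two vectors are linearly independent, and here the relevant determinants are $\det\begin{pmatrix} a & 0 \\ b & c\end{pmatrix}=ac$ and $\det\begin{pmatrix} 0 & b \\ c & d\end{pmatrix}=-bc$, both nonzero under the hypotheses $a,b,c,d\neq 0$.

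The substance is the failure of weak phase retrieval, and the plan is to exhibit an explicit pair $x,y$ that match all intensities but fail to weakly share a phase. The tool is the elementary difference-of-squares identity: for any $\phi$, one has $|\langle x,\phi\rangle|=|\langle y,\phi\rangle|$ if and only if $\langle x+y,\phi\rangle\langle x-y,\phi\rangle=0$. This turns the intensity-matching requirement into two orthogonality conditions and dictates the construction. Writing $u:=x+y$ and $v:=x-y$, so that $x=(u+v)/2$ and $y=(u-v)/2$, I would choose $u$ orthogonal to the first frame vector and $v$ orthogonal to the second; then both intensity equations hold automatically. For the set $\{(a,0),(b,c)\}$ the natural choice is $u=(0,s)$ (orthogonal to $(a,0)$) and $v=(c,-b)$ (orthogonal to $(b,c)$), and for $\{(0,b),(c,d)\}$ the mirror choice $u=(s,0)$, $v=(d,-c)$ does the same job.

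It then remains to check that these $x,y$ do not weakly have the same phase, which by Proposition~\ref{D:pp} reduces in $\RR^2$ to showing $sgn(x_1x_2)\neq sgn(y_1y_2)$; so I would compute the two products from the explicit coordinates and compare signs. The one point requiring care — and the step I expect to be the main obstacle — is that Proposition~\ref{D:pp} governs sign patterns only on coordinates where both entries are nonzero, so I must guarantee that all four numbers $x_1,x_2,y_1,y_2$ are nonzero before invoking it. The off-axis coordinates are nonzero for free (they equal $\pm c/2$), but the axis-direction coordinates are of the form $(s\pm b)/2$ (respectively $(s\pm d)/2$) and could vanish for a bad value of $s$. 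I would remove this by keeping $s$ free and taking it large: choosing $s=|b|+1$ forces $s-b>0$ and $s+b>0$, and a short computation then gives $x=(c/2,(s-b)/2)$, $y=(-c/2,(s+b)/2)$ with $sgn(x_1x_2)=sgn(c)$ and $sgn(y_1y_2)=-sgn(c)$, which are opposite. The choice $s=|d|+1$ handles the second set identically. With all coordinates nonzero and the two sign products opposite, Proposition~\ref{D:pp} shows that $x$ and $y$ fail to weakly share a phase while matching every intensity, which is precisely the failure of weak phase retrieval.
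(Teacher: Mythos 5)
Your proof is correct. The full spark check is exactly the right (and necessary) observation, since the paper's own proof never addresses that half of the statement. For the failure of weak phase retrieval, your route is organized differently from the paper's: the paper first normalizes the first set to $\{(1,0),(1,a)\}$, splits into the cases $a>0$ and $a<0$, and in each case simply exhibits a pair (e.g.\ $x=(1,1)$, $y=(1,-\tfrac{2}{a}-1)$) whose intensities it verifies by direct computation; the second set $\{(0,b),(c,d)\}$ is not treated explicitly at all. You instead make the underlying mechanism the organizing principle: the identity $|\langle x,\phi\rangle|^2-|\langle y,\phi\rangle|^2=\langle x+y,\phi\rangle\langle x-y,\phi\rangle$ reduces intensity matching to choosing $u=x+y$ orthogonal to one frame vector and $v=x-y$ orthogonal to the other, after which the free parameter $s$ is tuned to keep all coordinates nonzero and force opposite sign products. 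The paper's pairs secretly satisfy the same orthogonality conditions (e.g.\ in its Case 1, $x+y\perp(1,a)$ and $x-y\perp(1,0)$), so the two arguments are the same idea at bottom, but yours derives the counterexample rather than verifying a guessed one. What this buys you: no normalization step, no case split on the sign of $a$, uniform treatment of both sets in the lemma, and a proof of the full spark assertion --- so your write-up is actually more complete than the paper's. Your care in checking that Proposition~\ref{D:pp} is applied only when all four coordinates are nonzero is also warranted, since the sign-product criterion says nothing about vanishing coordinates.
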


 \begin{proof}
 We look at two cases for the first choice. Note that $c\not= 0$ or after normalizing we have only one vector
 and as we saw above this cannot do weak phase retrieval.
 \vskip12pt
 \noindent {\bf Case 1}: After normalizing we have vectors $x_1=(1,0),\ x_2=(1,a)$ with $0<a$.

 Now, define $x=(a_1,a_2)=(1,1)$ and $y=(b_1,b_2)=(1,-\frac{2}{a}-1)$. Then $a_1a_2>0$ while $b_1b_2<0$.
 Also,
 \[ |\langle x,x_1\rangle|=1=|\langle y,x_1\rangle|.\]
 Also, $ |\langle x,x_2\rangle|=1+a$ and
 \[ |\langle y,x_2\rangle|=|1+a(\frac{-2}{a}-1)|=|-(1+a)|=1+a.
 \]
 So $x_1,x_2$ fails weak phase retrieval.
 \vskip12pt
 \noindent {\bf Case 2}: After normalizing we have vectors $x_1=(1,0),\ x_2=(1,a)$ with $a<0$.

This time let $x=(1,-1)$ and $y=(1,\frac{-2}{a}+1)$. And a direct computation as above shows that this fails weak phase retrieval.
 \end{proof}

 \begin{lemma}\label{T:X6}
 If $0<ab,cd$ or $ab,cd<0$ then $\{(a,b), (c,d)\}$ fail weak phase retrieval.
 \end{lemma}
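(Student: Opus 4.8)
The plan is to normalize the pair and then manufacture an explicit counterexample via the sign criterion of Proposition \ref{D:pp}. Since weak phase retrieval depends only on the numbers $|\langle x,\phi_i\rangle|$, it is unaffected by rescaling each vector by \emph{any} nonzero constant (only the absolute value matters, so the sign of the scalar is irrelevant). Dividing $(a,b)$ by $a$ and $(c,d)$ by $c$ — both nonzero because $ab,cd\neq 0$ — I may assume the pair is $\{(1,\beta),(1,\delta)\}$ with $\beta=b/a$, $\delta=d/c$. The hypothesis $0<ab,cd$ forces $\beta,\delta>0$, while $ab,cd<0$ forces $\beta,\delta<0$; in both cases $\beta$ and $\delta$ share a common sign, which is the structural fact I will exploit.

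By Proposition \ref{D:pp}, two vectors $x=(u,v)$ and $y=(p,q)$ with nonzero coordinates weakly have the same phase if and only if $sgn(uv)=sgn(pq)$. So to defeat weak phase retrieval it suffices to produce $x,y$ with $|\langle x,(1,\beta)\rangle|=|\langle y,(1,\beta)\rangle|$ and $|\langle x,(1,\delta)\rangle|=|\langle y,(1,\delta)\rangle|$ yet $sgn(uv)\neq sgn(pq)$. If $\beta=\delta$ the two vectors are parallel, so we are testing against a single functional, which cannot do weak phase retrieval (the one-vector situation already recorded in the proof of Lemma \ref{T:X5}); hence assume $\beta\neq\delta$, making the two functionals independent. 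Matching absolute values means choosing $\langle y,(1,\beta)\rangle=\pm\langle x,(1,\beta)\rangle$ and $\langle y,(1,\delta)\rangle=\pm\langle x,(1,\delta)\rangle$. The patterns $(+,+)$ and $(-,-)$ give only $y=\pm x$, which has the same phase as $x$, so the single relevant case is $\langle y,(1,\beta)\rangle=\langle x,(1,\beta)\rangle$ with $\langle y,(1,\delta)\rangle=-\langle x,(1,\delta)\rangle$.

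Solving this nonsingular $2\times 2$ system expresses $y=(p,q)$ through $x=(u,v)$, and a short computation yields $sgn(pq)=sgn\bigl(-(u(\beta+\delta)+2\beta\delta v)(2u+(\beta+\delta)v)\bigr)$. It remains to pick $x$ forcing this to be opposite to $sgn(uv)$. When $\beta,\delta>0$ the choice $x=(1,1)$ works at once: both factors above are positive, so $sgn(pq)=-1\neq +1=sgn(uv)$, and one checks $p,q\neq 0$. The main obstacle is that this same choice \emph{fails} when $\beta,\delta<0$, since then $\beta+\delta<0$ while $2\beta\delta>0$ and the factors no longer have a forced sign; this sign bookkeeping is where all the genuine work lives. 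I would resolve it either by taking $x=(1,-1)$ instead (a parallel computation then gives $sgn(uv)=-1$ and $sgn(pq)=+1$), or, more cleanly, by invoking the reflection $(u,v)\mapsto(u,-v)$: this orthogonal map carries $\{(1,\beta),(1,\delta)\}$ to $\{(1,-\beta),(1,-\delta)\}$, preserves every absolute inner product, and flips the sign of all coordinate products simultaneously, hence preserves the weak phase retrieval property and reduces the case $\beta,\delta<0$ to the case $\beta,\delta>0$ already settled.
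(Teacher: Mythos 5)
Your proof is correct and takes essentially the same route as the paper's: after normalizing to $(1,\beta),(1,\delta)$, you solve the $2\times 2$ system with sign pattern $(+,-)$ starting from $x=(1,1)$, and your general solution $y=(p,q)$ specializes to exactly the vector $y=\bigl(1+a-\tfrac{2a+a^2+ab}{a-b},\tfrac{2+a+b}{a-b}\bigr)$ that the paper writes down directly. If anything, your write-up is more complete: you explicitly dispose of the parallel case $\beta=\delta$ and of the case $\beta,\delta<0$ (via $x=(1,-1)$ or the reflection $(u,v)\mapsto(u,-v)$), whereas the paper only works out the positive case in detail, and its claimed ``symmetric'' second case $\{(a,1),(b,1)\}$ with $a,b>0$ again has positive coordinate products, so the hypothesis $ab,cd<0$ is never explicitly treated there.
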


 \begin{proof}
 After normalizing, we assume our vectors are $x_1=(1,a),\ x_2=(1,b)$ with $0<a,b$ and $1\le a$. The other case is $\{(a,1),(b,1)\}$ with
 $a,b>0$ and $a>b$ and its proof is symmetric. Now let $x=(a_1,a_2)=(1,1)$ and $y=(b_1,b_2)=
 (1+a-\frac{2a+a^2+ab}{a-b},\frac{2+a+b)}{a-b})$. Then
 $a_1a_2>0$ and since $a>1$ and $b>0$, $b_1b_2<0$. Also,
 \[ \langle x,x_1\rangle = 1+a\mbox{ and }\langle y,x_1\rangle = 1+a,
 \]
 and
 \[ \langle x,x_2\rangle = 1+b\mbox{ and }\langle y,x_2\rangle = -1-b.\]
 therefore $|\langle x,x_i\rangle|=\langle y,x_i\rangle|$ for $i \in [2]$.
 we have that $a_1a_2>0$ and $b_2>0$. So this family fails weak phase retrieval if $b_1<0$. But
 \[ a+b+2ab>0,\]
 and so
 \[2a+2ab>a-b,\]
 and so
 \[ a(2+a+b)=2a+a^2+ab> a-b+a^2-ab=(a-b)(1+a),\]
 and since $a-b>0$,
 \[ \frac{a}{a-b}(2+a+b)>1+a,\]
 and so
\[ b_1=1+a-\frac{a}{a-b}(2+a+b)<0.\]
 \end{proof}

 \begin{lemma}\label{L3}
 If $ab>0$ and $cd<0$ then $\{(a,b), (c,d)\}$ do weak phase retrieval if and only if $a=b,\ c=-d$ or $a=c,\ b=-d$.
 \end{lemma}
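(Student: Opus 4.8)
The plan is to use the scale-invariance of weak phase retrieval to normalize, reduce the problem to a single nontrivial sign flip, and then read off the criterion from one scalar quantity.

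First I would note that, since only the moduli $|\langle x,x_i\rangle|$ enter the definition, weak phase retrieval is unchanged when each $x_i$ is multiplied by any nonzero scalar (dividing out $|c_i|$ recovers the original condition). As $ab>0$ and $cd<0$ force $a,c\neq 0$, I may normalize to $x_1=(1,s)$ and $x_2=(1,t)$ with $s=b/a>0$ and $t=d/c<0$. Since $t<0<s$ these are linearly independent, so $M=\left(\begin{smallmatrix}1&s\\1&t\end{smallmatrix}\right)$ is invertible and $x\mapsto(u,v):=(\langle x,x_1\rangle,\langle x,x_2\rangle)$ is a bijection of $\RR^2$.

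Next I would observe that the vectors $y$ sharing the moduli of $x$ are exactly $y=M^{-1}(\pm u,\pm v)^{T}$; the choices $(+,+)$ and $(-,-)$ give $y=\pm x$, which trivially have weakly the same phase. Hence, by Proposition \ref{D:pp}, weak phase retrieval holds precisely when the flip $(u,v)\mapsto(u,-v)$ always produces $y$ with $\mathrm{sgn}(a_1a_2)=\mathrm{sgn}(b_1b_2)$. Writing $x=(a_1,a_2)$, $y=(b_1,b_2)$ and inverting $M$, a direct computation gives $a_1a_2=P/\det^2$ and $b_1b_2=Q/\det^2$, where
\[ P=Au^2+Bv^2+Cuv,\qquad Q=Au^2+Bv^2-Cuv, \]
with $A=-t>0$, $B=-s<0$ and $C=s+t$. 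The crucial point is that $P$ and $Q$ differ only in the sign of the cross term $Cuv$, so everything is governed by $C$.

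Finally I would settle both directions. If $C=0$, i.e. $t=-s$, then $P=Q$ for every $(u,v)$, the two sign products always agree, and weak phase retrieval holds; here the normalized vectors are $(1,s),(1,-s)$, which is the stated form $(1,b),(1,-b)$, with the special case $s=1$ giving $(1,1),(1,-1)$. If $C\neq 0$, I would exhibit failure by choosing $(u,v)$ on the locus $Au^2+Bv^2=0$, possible since $A>0>B$, for instance $u=\sqrt{-B}$ and $v=\sqrt{A}$; then $P=Cuv\neq 0$ and $Q=-Cuv$ have strictly opposite signs, so the corresponding $x$ and $y$ fail to weakly share phase. I expect the only delicate points to be the bookkeeping that isolates the single relevant sign flip and the verification that $P$ and $Q$ collapse to the same expression up to the sign of $Cuv$; once that is in place, the equivalence with $s+t=0$, and hence with the two listed normalized forms, is immediate.
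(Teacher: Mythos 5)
Your proof is correct, but it takes a genuinely different route from the paper's. The paper argues both directions by hand in the original coordinates: for sufficiency it splits into cases according to which inner products flip sign and solves the resulting linear equations by adding and subtracting them to conclude $a_1a_2=b_1b_2$; for necessity it writes down an explicit, rather unmotivated counterexample pair $x=(1,a_2)$, $y=(b_1,b_2)$, choosing $a_2$ small enough that $2aba_2<a-b$, and then verifies by inequality bookkeeping that the moduli match while the sign products differ. You instead pass to frame-coefficient coordinates $(u,v)=Mx$, note that every competitor $y$ is $M^{-1}(\pm u,\pm v)$, and reduce the whole lemma to comparing the two quadratic forms $P=Au^2+Bv^2+Cuv$ and $Q=Au^2+Bv^2-Cuv$ with $C=s+t$: sufficiency because $C=0$ forces $P\equiv Q$, necessity because $A>0>B$ lets you choose $(u,v)$ on the null cone $Au^2+Bv^2=0$ and obtain nonzero products of strictly opposite sign. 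This buys a unified argument, a structural explanation of why $t=-s$ is exactly the dividing line, and a systematic rather than guessed counterexample; the paper's route buys elementary self-containedness at the cost of case analysis and inequality juggling. Two small points you should make explicit to close the argument: first, the flip $(-u,v)$ yields $-M^{-1}(u,-v)$, whose coordinate product equals that of $M^{-1}(u,-v)$, so it genuinely suffices to test the single flip $(u,v)\mapsto(u,-v)$; second, since Proposition \ref{D:pp} speaks of signs and $sgn$ is only meaningful for nonzero products, observe that in your sufficiency direction you actually have the exact equality $a_1a_2=b_1b_2$, which gives weakly equal phases even when this common value is zero (a coordinate of one vector vanishing leaves at most one constrained coordinate in $\RR^2$).
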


 \begin{proof}
\noindent $\Leftarrow$:
 After normalizing, in the first case we may assume our vectors are $(1,1),\ (1,-1)$ and we know that
 these vectors do phase retrieval.
  \vskip12pt
In the second case we have $a=c$ so our two vectors are $x_1=(1,a),\ x_2=(1,-a)$.
 \vskip12pt
 In this case, let $x=(a_1,a_2)$ and $y=(b_1,b_2)$ and assume
 \[ |\langle x,x_1\rangle|=|\langle y,x_1\rangle|\mbox{ and }|\langle x,x_2\rangle|=|\langle y,x_2\rangle|.\]
 There are two cases:
 \vskip12pt
 \noindent {\bf Case 1}: We have
 \[ \langle x,x_1\rangle = \langle y,x_1\rangle\mbox{ and }\langle x,x_2\rangle = \langle y,x_2\rangle.\]
 \vskip12pt
 In this case,
 \[
 \langle x,x_1\rangle = a_1+aa_2
 = \langle y,x_1\rangle
 = b_1+ab_2.\]
And
\[ \langle x,x_2\rangle = a_1-aa_2=\langle y,x_2\rangle = b_1-ab_2.\]
 Adding these two equations together we get:
 \[ 2a_1=2b_1,\mbox{ and so } a_1=b_1\mbox{ and }a_2=b_2.\]
 That is, $a_1a_2=b_1b_2$ have the same signs.
 \vskip12pt
 \noindent {\bf Case 2}: We have
 \[ \langle x,x_1\rangle = \langle y,x_1\rangle\mbox{ and }\langle x,x_2\rangle = -\langle y,x_2\rangle.\]
 \vskip12pt
 In this case we have
 \[ \langle x,x_1\rangle = a_1+aa_2=\langle y,x_1\rangle = b_1+ab_2,\]
 and
 \[ \langle x,x_2\rangle = a_1-aa_2=-\langle y,x_2\rangle = -b_1+ab_2.\]
 Adding these equations together we get:
 \[ 2a_1=2ab_2 \mbox{ and so } b_2=\frac{a_1}{a} ,
 \]
 And subtracting the second equation from the first we get:
 \[ 2aa_2=2b_1\mbox{ and so }b_1=aa_2.\]
 So
 \[ b_1b_2= aa_2\frac{a_1}{a}=a_1a_2.\]

 \vskip12pt
 In both cases, $a_1a_2=b_1b_2$ and so our vectors do weak phase retrieval.
 \vskip12pt
  \noindent $\Rightarrow$:
By our assumptions, after normalizing the vectors we must have $(1,a),\ (1,-b)$ or $(a,1),\ (-b,1)$ with $a\not= b$ We may assume
$a>b$ for  with $a> b>0$. For if not, switch to $(\frac{1}{a},1),\ (\frac{-1}{b},1)$ and $\frac{1}{a}>\frac{1}{b}$ and the argument
in this case is the same as the other case. So let $x_1=(1,a),\ x_2=(1,-b)$ with $a>b$. Also,
  let $x=(a_1,a_2)=(1,a_2)$ and $y=(b_1,b_2)$ where we choose $0<a_2$ small
 enough so that $2aba_2<a-b$. Finally, let
 \[ b_1=1+aa_2-\frac{a}{a+b}[2+aa_2-a_2b]\mbox{ and }b_2=\frac{2+aa_2-a_2b}{a+b}.\]
 We note that since $a>b$ and $a,b,a_2>0$ we have
 \[ b_2=\frac{2+aa_2-a_2b}{a+b}= \frac{2+a_2(a-b)}{a+b}>0.\]
 We now show that $b_1<0$. By our assumption,
 \[ 2aba_2<a-b,\]
 and so
 \[ aba_2<a-b(1+aa_2),\]
 and so
 \[ a+b+a^2a_2+aba_2<2a+a^2a_2-aa_2b\]
 and so
 \[ (1+aa_2)(a+b)< 2a+a^2a_2 -a_2ba\]
 and so
 \[1+aa_2<\frac{a}{a+b}[2+aa_2 -a_2b].\]
 Hence, $b_1<0$. So $a_1a_2>0$ and $b_1b_2<0$.
 Now we compute: $ \langle x,x_1\rangle = 1+aa_2$ while
 \begin{align*}
 \langle y,x_1\rangle &= b_1+ab_2\\
 &= 1+aa_2-\frac{a}{a+b}[2+aa_2-a_2b]+a\frac{2+aa_2-a_2b}{a+b}\\
 &= 1+aa_2=\langle x,x_1\rangle.
 \end{align*}
 Next, $\langle x,x_2\rangle = 1-a_2b$, while
 \begin{align*}
 \langle y,x_2\rangle&= 1+aa_2-\frac{a}{a+b}[2+aa_2-a_2b]-b\frac{2+a_2(a-b)}{a+b}\\
 &= 1+aa_2-\frac{2+a_2(a-b)}{a+b}(a+b)\\
 &= 1+aa_2-2-a_2(a-b)\\
 &= -1+a_2b\\
 &= -\langle x,x_2\rangle.
 \end{align*}
 It follows that our set fails weak phase retrieval.

  \end{proof}
 \section{Weak Phase Retrievable Frames Must Span}
 We now solve a longstanding open problem in the field.

 \begin{theorem}\label{T:X2}
 If $\{x_i\}_{i=1}^m$ is a weak phase retrievable frame in $\RR^n$ then $span\{x_i\}_{i=1}^m=\RR^n$.
 \end{theorem}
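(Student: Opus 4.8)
The plan is to argue by contradiction against the spanning conclusion, exploiting the fact that the only way to \emph{fail} to span is to have a nonzero vector orthogonal to the whole family. Suppose $\{x_i\}_{i=1}^m$ does weak phase retrieval but $[\{x_i\}_{i=1}^m]=:V$ is a proper subspace of $\RR^n$. Then $V^\perp \ne \{0\}$, so we may fix a nonzero $w\in V^\perp$. The crucial (and essentially free) observation is that translating by $w$ leaves every intensity measurement unchanged: for every $x\in\RR^n$ and every $i\in[m]$ we have $\langle x+w,x_i\rangle=\langle x,x_i\rangle$ since $w\perp x_i$, hence $|\langle x+w,x_i\rangle|=|\langle x,x_i\rangle|$. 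Consequently weak phase retrieval forces $x$ and $x+w$ to weakly have the same phase for \emph{every} $x$, and the whole proof reduces to exhibiting a single $x$ for which this is false.

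To produce such an $x$ I would invoke the sign-product characterization of Proposition \ref{D:pp}: it is enough to find indices $i_0\ne j_0$ with $sgn(a_{i_0}a_{j_0})\ne sgn(b_{i_0}b_{j_0})$, where $b=x+w$, while arranging that all four entries $a_{i_0},a_{j_0},b_{i_0},b_{j_0}$ are nonzero so that no sign is ambiguous. Since $w\ne 0$, pick $i_0$ with $w_{i_0}\ne 0$; since $n\ge 2$ (the case $n=1$ is trivial, as any nonzero vector already spans $\RR^1$) pick any $j_0\ne i_0$. Set $a_{i_0}=-w_{i_0}/2$, so that $b_{i_0}=a_{i_0}+w_{i_0}=w_{i_0}/2$ has the \emph{opposite} sign to $a_{i_0}$, and set $a_{j_0}=1+|w_{j_0}|>0$, so that $b_{j_0}=a_{j_0}+w_{j_0}\ge 1>0$ keeps the sign of $a_{j_0}$; the remaining coordinates of $x$ may be taken to be $0$. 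Then exactly one of the two products flips sign, so $sgn(a_{i_0}a_{j_0})\ne sgn(b_{i_0}b_{j_0})$.

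With this $x$, Proposition \ref{D:pp} shows that $x$ and $x+w$ do \emph{not} weakly have the same phase, yet by the orthogonality observation they yield identical intensity measurements against $\{x_i\}_{i=1}^m$. This contradicts the assumption that the family does weak phase retrieval. Hence no nonzero vector is orthogonal to every $x_i$, which is exactly the statement $[\{x_i\}_{i=1}^m]=\RR^n$.

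The only step requiring genuine care — and the one I would treat as the main obstacle — is the bookkeeping in the construction of $x$: one must guarantee simultaneously that all four relevant entries are nonzero (so the signs are defined) and that precisely one of the two sign products changes. An over-hasty choice could either leave an entry equal to zero, making $sgn$ undefined, or flip both products at once and thereby \emph{preserve} weak phase, voiding the contradiction. Once the coordinates are chosen as above, every remaining assertion is immediate from $w\perp x_i$ and the equivalence in Proposition \ref{D:pp}.
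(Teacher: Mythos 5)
Your proof is correct. It shares the paper's core mechanism --- failure to span yields a nonzero $w$ orthogonal to every $x_i$, so adding $w$ is invisible to all the measurements --- but your execution is genuinely different and simpler. The paper proves the contrapositive by comparing a vector $x$ lying \emph{inside} $span\{x_i\}_{i=1}^m$ with perturbations $x+az$ (or $x\pm z$) by an orthogonal $z$; because its test vector is constrained to the span, the paper cannot prescribe coordinates freely and is forced into a two-case analysis according to whether some such pair $x,z$ shares two nonzero coordinates, with the second case requiring a separate disjoint-support argument comparing $x+z$ and $x-z$. You instead observe that the identity $\langle x+w,x_i\rangle=\langle x,x_i\rangle$ holds for \emph{every} $x\in\RR^n$, not only for $x$ in the span, so the test vector can be designed at will: supported on exactly two coordinates, with $a_{i_0}=-w_{i_0}/2$ forcing a sign flip under addition of $w$ and $a_{j_0}=1+|w_{j_0}|$ forcing sign preservation. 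This single explicit construction collapses the paper's case analysis, and your bookkeeping (all four relevant entries nonzero, exactly one of the two products changing sign) is precisely what the definition requires; in fact you could bypass Proposition \ref{D:pp} entirely, since coordinate $i_0$ demands $\theta=-1$ while coordinate $j_0$ demands $\theta=+1$, which is already a contradiction with the definition of weakly sharing a phase. One small caveat: your parenthetical dismissal of $n=1$ should lean on the frame hypothesis rather than on weak phase retrieval, since in $\RR^1$ \emph{every} pair of vectors weakly has the same phase (the single coordinate can always be matched by some $\theta\in\{1,-1\}$), so weak phase retrieval alone is vacuous there; it is the lower frame bound that guarantees a nonzero vector and hence spanning. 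With that reading, your argument is complete.
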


 \begin{proof}
 We will prove the contrapositive. I.e. We will assume that $\{x_i\}_{i=1}^m$ does not span and show that it cannot do
 weak phase retrieval.
 We will examine two cases.
 \vskip12pt
 \noindent {\bf Case 1}: There is a vector $z\perp span\{x_i\}_{i=1}^m$ and a vector $x\in span\{x_i\}_{i=1}^m$ so
 that they have two coordinates in common which are not zero.
 \vskip12pt
 Since $z \perp x$,
 \[ \sum_{k=1}^nz(k)x(k)=0.\]
  So there is some $1\le i \not= j \le n$ and $z(i),z(j)\not= 0$ and $x(i),x(j)\not= 0$ and one of the following holds:
  \[ x(i),x(j)>0\mbox{ and }z(i)>0,\ z(j)<0.\]
  Choose $0<a$ so that $az(j)+x(j)<0$. Then $az(i)+x(i)>0$ so
  \[ (az(i)+x(i))(az(j)+x(j))<0\mbox{ while }x(i)x(j)>0.\]
  Also,
  \[ \langle  az+x,x_i\rangle = \langle x,x_i\rangle \mbox{ for all }i=1,2,\ldots.\]
  So weak phase retrieval fails.

  The other possibility is that $x(i)>0,\ x(j)<0$ and $z(i),\ z(j)>0$. Now choose $a>0$ so that $az(j)+x(j)>0$. Now the
  proof follows as above.
  \vskip12pt
  \noindent {\bf Case 2}: If case one fails, then whenever $0\not= x,z$ and $z\perp x\in span\{x_i\}_{i=1}^m$ then $z,x$ do not have
  two non-zero coordinates in common. But since they are orthogonal, they cannot have only one non-zero coordinate
  in common. Hence, their supports are disjoint. So consider the vectors $x+z,\ x-z$. Choose $i,j$ so that $x(i)\not= 0$ and so $z(i)= 0$,
  Then
  \[ \langle x+z,x_i\rangle = \langle x,x_i\rangle = \langle x-z,x_i\rangle \mbox{ for all }i=1,2,\ldots.\]
  But if we choose $i\in support \ x$ and $j\in support \ z$ then
  \[ (x+z)(i) (x+z)(j)=x(i)z(j)\mbox{ and }(x-z)(i)(x-z)(j)=x(i)(-z(j)).\]
  So our frame fails weak phase retrieval.
 \end{proof}

\section{Weak phase retrieval by projections}
Now we present the concept of ``Weak phase retrieval by projections''.
\begin{definition}\label{D: Real weak phase retrieval by projection}
Let $\{W_i\}_{i=1}^m$ be a family of subspaces of $\mathbb{R}^n$ with respective projections $\{P_i\}_{i=1}^m$. This family does weak phase retrieval if given $x,y \in \mathbb{R}^n$ satisfying \\
$\|P_ix\|=\|P_iy\|$, for all $i \in [m]$,  then $x$ and $y$ weakly have the same phase.
\end{definition}
From Definition \ref{D: Real weak phase retrieval by projection}, it is obvious that if a family of subspaces $\{W_i\}_{i=1}^m$  does phase retrieval in $\mathbb{R}^n$, then it does weak phase retrieval too.

We need a result from \cite{ACNT2015}.
\begin{theorem} \label{T:weak is full 1}
  If $X=\{x_i\}_{i=1}^{2n-2}$ does weak phase retrieval in $\mathbb{R}^n$ then $X$ is full spark.
\end{theorem}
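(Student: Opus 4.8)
The plan is to prove the contrapositive: assuming $X=\{x_i\}_{i=1}^{2n-2}$ is \emph{not} full spark, I will produce vectors $x,y\in\RR^n$ with $|\langle x,x_i\rangle|=|\langle y,x_i\rangle|$ for every $i$ that nonetheless fail to weakly have the same phase. The organizing device is the substitution $u=x+y$, $v=x-y$, so that $x=\frac{1}{2}(u+v)$ and $y=\frac{1}{2}(u-v)$. Since $\langle x,x_i\rangle+\langle y,x_i\rangle=\langle u,x_i\rangle$ and $\langle x,x_i\rangle-\langle y,x_i\rangle=\langle v,x_i\rangle$, one gets the identity $\langle x,x_i\rangle^2-\langle y,x_i\rangle^2=\langle u,x_i\rangle\langle v,x_i\rangle$. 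Hence the requirement $|\langle x,x_i\rangle|=|\langle y,x_i\rangle|$ for all $i$ is equivalent to asking that for each $i$ at least one of $\langle u,x_i\rangle$, $\langle v,x_i\rangle$ vanish. The whole problem therefore reduces to choosing a linearly independent pair $u,v$ (linear independence being what guarantees $x\neq\pm y$) for which the index set $[2n-2]$ splits into the zero-sets of $u$ and of $v$, while $x,y$ violate the sign condition of Proposition \ref{D:pp}.

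To build such a pair I use the failure of full spark. By definition this gives a subset $I\subset[2n-2]$ with $|I|=n$ for which $\{x_i\}_{i\in I}$ is linearly dependent, hence spans a subspace of dimension at most $n-1$; its orthogonal complement is nonzero, so I may pick $0\neq u$ with $\langle u,x_i\rangle=0$ for all $i\in I$. The complementary index set has $|I^c|=(2n-2)-n=n-2$ elements, so $\{x_i\}_{i\in I^c}$ spans at most an $(n-2)$-dimensional subspace and its orthogonal complement has dimension at least $2$. I then choose $0\neq v$ in that complement which is not a scalar multiple of $u$ (possible precisely because the complement is at least two-dimensional), so that $\langle v,x_i\rangle=0$ for all $i\in I^c$. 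By construction every index lies in $I$ or $I^c$, so $\langle u,x_i\rangle\langle v,x_i\rangle=0$ for all $i$, and the identity above yields $|\langle x,x_i\rangle|=|\langle y,x_i\rangle|$ for all $i$.

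It remains to arrange that $x$ and $y$ do not weakly have the same phase, and this is the step I expect to be the main obstacle. Writing $x(k)=\frac{1}{2}(u(k)+v(k))$ and $y(k)=\frac{1}{2}(u(k)-v(k))$, a short expansion shows that, for coordinates $i\neq j$ with $x(i),x(j),y(i),y(j)$ all nonzero, the products $x(i)x(j)$ and $y(i)y(j)$ have opposite signs exactly when $(u(i)^2-v(i)^2)(u(j)^2-v(j)^2)<0$, i.e. when $|u(\cdot)|>|v(\cdot)|$ holds in exactly one of the two coordinates. By Proposition \ref{D:pp}, exhibiting a single such pair $i,j$ forces $x,y$ to fail weak phase retrieval. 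To produce it I exploit the remaining freedom in $v$: replacing $v$ by $tv$ (which preserves $\langle v,x_i\rangle=0$ for $i\in I^c$, hence preserves the magnitude condition) and letting $t$ range over $(0,\infty)$, the comparison of $|u(k)|$ with $t|v(k)|$ switches at the threshold $t_k=|u(k)|/|v(k)|$ on the common support of $u$ and $v$; choosing $t$ strictly between two distinct thresholds $t_i\neq t_j$ makes the comparison hold in coordinate $i$ but not in $j$, which is exactly the sought sign flip, while avoiding the finitely many $t$ at which some entry vanishes keeps the four relevant coordinates nonzero. The crux is therefore to guarantee that $v$ can be selected inside the (at least two-dimensional) solution space so that the thresholds $t_k$ are not all equal on a common support of size at least two; the only obstruction would be a $v$ whose entries are coordinatewise proportional to those of $u$, and the two-dimensional freedom in the choice of $v$ lets me avoid this single degenerate configuration. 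Once two distinct thresholds are available, the contrapositive — and hence the theorem — is complete.
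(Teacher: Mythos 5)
Your overall strategy is the right one, and in fact the paper itself gives no proof of Theorem \ref{T:weak is full 1} (it is imported from \cite{ACNT2015}); the closest thing in the paper is the proof of Theorem \ref{T:X2}, which uses exactly your ingredients: the substitution $u=x+y$, $v=x-y$, the identity $\langle x,x_i\rangle^2-\langle y,x_i\rangle^2=\langle u,x_i\rangle\langle v,x_i\rangle$, orthogonality to split the index set, and Proposition \ref{D:pp} to detect the sign flip. Your choice of $u\perp\{x_i\}_{i\in I}$ and $v\perp\{x_i\}_{i\in I^c}$ from a dependent $n$-subset $I$ is correct, as is the dimension count giving at least two-dimensional freedom for $v$.

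The gap is in the final step, where you manufacture the sign flip. You restrict attention to the \emph{common support} of $u$ and $v$, demand two distinct thresholds there, and claim the only obstruction is a $v$ coordinatewise proportional to $u$. Both claims fail. First, coordinates outside the common support are not irrelevant noise, they are free sign flips: if $u(i)\neq 0=v(i)$ and $u(j)=0\neq v(j)$, then $x(i)=y(i)=u(i)/2$ and $x(j)=-y(j)=v(j)/2$, so $x(i)x(j)$ and $y(i)y(j)$ are nonzero with opposite signs for every scaling of $v$ (this disjoint-support mechanism is precisely Case 2 in the paper's proof of Theorem \ref{T:X2}). Second, a non-full-spark frame can force the common support to have size at most one, making your stated goal unachievable while your obstruction claim is vacuously false. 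Concretely, take $n=3$ and $x_1=e_1$, $x_2=e_2$, $x_3=e_1+e_2$, $x_4=e_2+e_3$. The only dependent $3$-subset is $I=\{1,2,3\}$, which forces $u=e_3$ up to scale, and $W=x_4^{\perp}=\mathrm{span}\{e_1,e_2-e_3\}$; every $v=(a,b,-b)\in W$ has common support with $u$ of size at most one, and no nonzero $v\in W$ has $|v|$ proportional to $|u|$, so your procedure halts with nothing — yet the counterexample exists: $v=e_1$ gives $x=(1/2,0,1/2)$, $y=(-1/2,0,1/2)$ with equal measurements and $x(1)x(3)>0>y(1)y(3)$. The repair is to use extended thresholds $t_k=|u(k)|/|v(k)|\in[0,\infty]$ over \emph{all} coordinates where $u(k)$ or $v(k)$ is nonzero, with $t_k=0$ when $u(k)=0$ and $t_k=\infty$ when $v(k)=0$. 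Then "all thresholds equal" is equivalent to $|v(k)|=c\,|u(k)|$ for all $k$ and some $c\geq 0$, i.e. $v$ lies in a finite union of lines (one for each sign pattern on $\mathrm{supp}(u)$ — note this is up to $2^{|\mathrm{supp}(u)|-1}$ lines, not the single line $\RR u$), and a subspace of dimension at least $2$ cannot be contained in finitely many lines. With that correction, your choice of $t$ strictly between two distinct thresholds completes the contrapositive.
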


The converse to this theorem fails.

 \begin{example}
  Now we  give an example of a full spark not weak phase retrievable frame in  $\mathbb{R}^3$
  with 4 vectors. The frame $\{x_i\}_{i=1}^4$ given by
   $$x_1=(1,0,0),x_2=(0,1,0),x_3=(0,0,1),x_4=(1,1,-3)$$ 
   is full spark but it can not do weak phase retrieval for $\mathbb{R}^3$. Take $x=(4,3,1)$ and $y=(4,-3,-1)$, then $|\langle x,x_i \rangle|=|\langle y,x_i \rangle|$ for $i \in [4]$, but $x$ and $y$ do not weakly have the same sign.
   \end{example}

\begin{proposition}
  If the frame $\{x_i\}_{i=1}^m$ does weak phase retrieval in $\mathbb{R}^n$ then $\{Px_i\}_{i=1}^m$ yields weak phase retrieval for all orthogonal projections $P$ on $\mathbb{R}^n$.
\end{proposition}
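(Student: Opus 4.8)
The plan is to push the hypothesis on the projected family $\{Px_i\}$ back onto the original frame $\{x_i\}$ using the single structural fact that an orthogonal projection is self-adjoint and idempotent, $P=P^{*}=P^{2}$. First I would record the key identity: for every $u\in\RR^n$ and every $i$,
\[ \langle u,Px_i\rangle=\langle Pu,x_i\rangle .\]
This lets me replace a measurement of $u$ against the projected vector $Px_i$ by a measurement of the projected vector $Pu$ against the original $x_i$, which is exactly where the weak phase retrieval hypothesis on $\{x_i\}$ can be applied.

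Next I would fix the space on which the conclusion is to hold, namely $W:=P\RR^n$, the range of $P$; this is the natural domain for $\{Px_i\}$, since these vectors all lie in $W$ and (using the displayed identity together with $\sum_i|\langle x,Px_i\rangle|^2=\sum_i|\langle x,x_i\rangle|^2$ for $x\in W$) they inherit the frame bounds of $\{x_i\}$ and form a frame for $W$. It is worth flagging that one cannot hope for the conclusion on all of $\RR^n$ when $P\neq I$: any two distinct vectors in $W^{\perp}=\ker P$ produce identically zero measurements $\langle\cdot,Px_i\rangle$ yet need not weakly have the same phase, so the statement must be read on $W$.

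The core of the argument is then short. Given $x,y\in W$ with $|\langle x,Px_i\rangle|=|\langle y,Px_i\rangle|$ for all $i\in[m]$, I would use $Px=x$ and $Py=y$ together with the identity above to rewrite these equalities as $|\langle x,x_i\rangle|=|\langle y,x_i\rangle|$ for all $i$. Since $\{x_i\}$ does weak phase retrieval in $\RR^n$ and $x,y\in W\subseteq\RR^n$, the definition immediately yields that $x$ and $y$ weakly have the same phase, which is what we want. The only genuine obstacle here is conceptual rather than computational: one must recognize that ``$\{Px_i\}$ does weak phase retrieval'' is meaningful only on the range of $P$, and that ``weakly have the same phase'' is always measured in the fixed ambient coordinates of $\RR^n$, so that no re-coordinatization of $W$ is required. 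Once this is settled, self-adjointness of $P$ does all of the work.
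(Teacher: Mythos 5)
Your proof is correct and follows essentially the same route as the paper: restrict to $x,y$ in the range of $P$, use self-adjointness and idempotence of $P$ to rewrite $|\langle x,Px_i\rangle|=|\langle Px,x_i\rangle|=|\langle x,x_i\rangle|$ (and similarly for $y$), and then invoke the weak phase retrieval hypothesis on $\{x_i\}_{i=1}^m$. Your additional remarks (that the conclusion must be read on $W=P\RR^n$, and that $\{Px_i\}_{i=1}^m$ is a frame for $W$) only make explicit what the paper leaves implicit when it takes $x,y\in P(\mathbb{R}^n)$.
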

\begin{proof}
   Let $x,y \in P(\mathbb{R}^n)$ such that $|\langle x,Px_i \rangle|^2=|\langle y,Px_i \rangle|^2$ for all $i \in [n]$. For
all $i \in [n]$, we have
$$|\langle x,x_i \rangle|^2=|\langle Px,x_i \rangle|^2=|\langle x,Px_i \rangle|^2=|\langle y,Px_i \rangle|^2=|\langle Py,x_i \rangle|^2=|\langle y,x_i \rangle|^2.$$
Since $\{x_i\}_{i=1}^m$ does weak phase retrieval, $x,y$ weakly have the same sign.
\end{proof}

\begin{theorem}\label{T: T12}\cite{Ed2017}
  If $n=2^k-1$, for any $k \in \mathbb{N}$, then it takes $2n-1$ subspaces of $\mathbb{R}^n$ to do phase retrieval.
\end{theorem}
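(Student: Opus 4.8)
The plan is to first convert the statement about projections into a statement about bilinear maps, and then to isolate the lower bound into a topological non-existence result in which the hypothesis $n=2^k-1$ is exactly what is needed. I would begin by recording the easy half, namely that $2n-1$ subspaces always suffice, in every dimension. Take a full spark family $\{x_i\}_{i=1}^{2n-1}$ in $\RR^n$ and let $P_i$ be the rank-one projection onto $\mathrm{span}\{x_i\}$. Since $\|P_i x\|=|\langle x,x_i\rangle|/\|x_i\|$, the numbers $\{\|P_i x\|\}$ recover the moduli $\{|\langle x,x_i\rangle|\}$, so phase retrieval by these projections is equivalent to phase retrieval by the full spark frame, which holds by Corollary~\ref{T:phase retrievality and complement property and full apark for finite case}. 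Thus the real content of the theorem is the lower bound: for $n=2^k-1$, no family of $2n-2$ subspaces can do phase retrieval.

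For the lower bound I would use the following reformulation, valid for arbitrary projections. For $x,y\in\RR^n$ set $u=x+y$ and $v=x-y$; since each $P_i$ is self-adjoint and idempotent,
\[
\|P_i x\|^2-\|P_i y\|^2=\langle P_i(x+y),x-y\rangle=\langle P_i u,v\rangle .
\]
Hence $\{P_i\}_{i=1}^m$ fails to separate $x,y$ up to sign exactly when there are nonzero $u,v$ with $\langle P_i u,v\rangle=0$ for all $i$. Therefore $\{P_i\}$ does phase retrieval if and only if the symmetric bilinear map
\[
B(u,v)=\bigl(\langle P_1 u,v\rangle,\dots,\langle P_m u,v\rangle\bigr)\in\RR^m
\]
is nonsingular, i.e. $B(u,v)=0$ forces $u=0$ or $v=0$; equivalently, $\{P_i u\}_{i=1}^m$ spans $\RR^n$ for every $u\ne 0$. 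Consequently, if $2n-2$ subspaces did phase retrieval in $\RR^n$ we would obtain a nonsingular symmetric bilinear map $B\colon\RR^n\times\RR^n\to\RR^{2n-2}$.

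It then remains to rule out such a map when $n=2^k-1$, and this is the heart of the matter. Passing to projective spaces, $B$ induces an axial map $g\colon \mathbb{RP}^{n-1}\times\mathbb{RP}^{n-1}\to\mathbb{RP}^{2n-3}$ with $g^*(c)=a+b$ on $\mathbb{Z}/2$-cohomology, so that $(a+b)^{2n-2}=0$ in $\mathbb{Z}/2[a,b]/(a^n,b^n)$. The only surviving monomial is $a^{n-1}b^{n-1}$, forcing $\binom{2n-2}{n-1}\equiv 0\pmod 2$; but this congruence does hold for $n=2^k-1$ (for instance $\binom{4}{2}=6$ when $n=3$). Thus the purely bilinear Hopf obstruction is satisfied, and a naive dimension count is likewise insufficient — so the symmetry of $B$, together with the positivity and idempotency coming from genuine projections, must be exploited.

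The main obstacle, and the step requiring the deepest input, is therefore to show that this extra structure obstructs $m=2n-2$ precisely in the dimensions $n=2^k-1$. I would carry this out through characteristic-class computations for the associated quadratic map, equivalently for the quadratic-form embedding $\mathbb{RP}^{n-1}\hookrightarrow\RR^m$ determined by the $P_i$, in the spirit of the Hopf–Stiefel non-existence theorems and as done in \cite{Ed2017}. I would organize the write-up so that the role of the hypothesis $n=2^k-1$ is concentrated in a single mod-$2$ computation, with the underlying topological non-existence theorem for symmetric nonsingular bilinear maps invoked as a black box; matching this against the sufficiency in the first paragraph then yields that the minimal number of subspaces is exactly $2n-1$.
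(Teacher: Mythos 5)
A preliminary remark: the paper offers no proof of this statement at all --- it is quoted from \cite{Ed2017}, just like the spanning criterion of Theorem~\ref{T14} --- so your attempt can only be judged on its own terms. Its sound parts are the routine ones: the polarization identity $\|P_ix\|^2-\|P_iy\|^2=\langle P_i(x+y),x-y\rangle$; the resulting equivalence of phase retrieval with nonsingularity of the symmetric bilinear map $B(u,v)=(\langle P_iu,v\rangle)_{i=1}^m$ (this is precisely Theorem~\ref{T14}); the sufficiency of $2n-1$ rank-one projections; and the observation that the Stiefel--Hopf condition is vacuous here (indeed $\binom{2n-2}{n-1}=2\binom{2n-3}{n-2}$ is even for \emph{every} $n$, not only for $n=2^k-1$). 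But the entire content of the theorem --- that $2n-2$ projections cannot work when $n=2^k-1$ --- is relegated to a ``black box'' to be imported ``as done in \cite{Ed2017}''. Since the statement under proof \emph{is} the result of \cite{Ed2017}, this is circular: what you have written is a correct reformulation of the problem together with an admission that its solution is assumed.

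Moreover, the route you sketch for the black box cannot be completed once $k\ge 3$, because every topological object you extract from a hypothetical family of $2n-2$ projections genuinely exists for such $n$. A nonsingular symmetric $B$ gives, after normalizing $x\mapsto B(x,x)$, a topological embedding $\RR P^{n-1}\hookrightarrow S^{2n-3}$, hence into $\RR^{2n-3}$; but for $n\ge 6$ Haefliger's metastable-range embedding theorem ($2m\ge 3d+3$ with $d=n-1$, $m=2n-3$) shows that $\RR P^{n-1}$ \emph{does} embed smoothly in $\RR^{2n-3}$ --- e.g.\ $\RR P^{6}$ embeds in $\RR^{11}$ although $7=2^3-1$. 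Likewise the axial map exists (its Hopf condition is vacuous, as you noted), and even the bundle splitting $T\RR P^{n-1}\oplus Q\cong \RR^{2n-2}$ (trivial bundle) forced by the spanning criterion is realizable: already for $n=3$ one has $T\RR P^{2}\oplus(\gamma\oplus\RR)\cong 4\gamma\cong\RR^{4}$, with $\gamma$ the tautological line bundle. Hence no characteristic-class computation applied to these induced objects can produce a contradiction; only the case $k=2$ closes along your lines, via the non-embeddability of $\RR P^{2}$ in $S^{3}$. Any actual proof must use the fact that the map is built from quadratic forms of \emph{positive projections}: for instance, phase retrieval forces the subspace $\{A\in\mathrm{Sym}(n):\mathrm{tr}(P_iA)=0,\ 1\le i\le m\}$, of codimension at most $m$, to meet the cone of symmetric matrices of rank at most $2$ only at $0$ (rank-one and indefinite elements are excluded by nonsingularity, definite ones by positivity), and the impossibility of this for $m=2n-2$ when $n=2^k-1$ is exactly where the dyadic hypothesis must enter. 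That algebraic step is missing from your proposal, and it is the theorem.
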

For $n \ne 2^k-1$, for any $k$, we have the following fact:
\begin{theorem} \label{T:T13}\cite{ACCHTTX17}
  It takes at least $2n-2$ hyperplanes in $\mathbb{R}^n$ to do phase retrieval.
\end{theorem}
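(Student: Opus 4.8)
The plan is to prove the stated lower bound via its contrapositive: a family of at most $2n-3$ hyperplanes can never do phase retrieval. Represent each hyperplane as $W_i=\{u_i\}^{\perp}$ for a unit normal $u_i$, so that the orthogonal projection onto $W_i$ is $P_ix=x-\langle x,u_i\rangle u_i$ and hence
$$\|P_ix\|^2=\|x\|^2-\langle x,u_i\rangle^2.$$
The first step is to reformulate the phase-retrieval condition in terms of the normals. Given $x,y\in\RR^n$, set $v=x-y$ and $w=x+y$; then $x=\pm y$ holds exactly when $v=0$ or $w=0$. A direct computation gives $\|P_ix\|^2-\|P_iy\|^2=\langle v,w\rangle-\langle v,u_i\rangle\langle w,u_i\rangle$, so the projection magnitudes of $x$ and $y$ agree for every $i$ precisely when $\langle v,u_i\rangle\langle w,u_i\rangle=\langle v,w\rangle$ for all $i\in[m]$. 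Thus the family fails phase retrieval as soon as one can exhibit nonzero $v,w$ solving this system, since then $x=\tfrac12(w+v)$ and $y=\tfrac12(w-v)$ give a genuine counterexample with $x\ne\pm y$.

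The second step is to hunt for such $v,w$ only among \emph{orthogonal} pairs, which annihilates the right-hand side: if $v\perp w$ the system collapses to the requirement that, for each $i$, either $v\perp u_i$ or $w\perp u_i$. I would therefore split the index set as $[m]=S\sqcup T$ and seek $v\perp\{u_i\}_{i\in S}$ and $w\perp\{u_i\}_{i\in T}$ subject to the extra constraint $v\perp w$. Writing $V_S=\operatorname{span}\{u_i\}_{i\in S}^{\perp}$, we have $\dim V_S\ge n-|S|$, so $V_S$ contains a nonzero vector once $|S|\le n-1$; after fixing such a $v$, the subspace $\operatorname{span}\{u_i\}_{i\in T}^{\perp}\cap v^{\perp}$ has dimension at least $(n-|T|)-1$, so it contains a nonzero $w$ once $|T|\le n-2$.

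The third step is the counting that makes this work: if $m\le 2n-3$, one can always partition $[m]$ into blocks of sizes $|S|\le n-1$ and $|T|\le n-2$ (place at most $n-2$ indices in $T$ and the remaining at most $n-1$ in $S$). The two selections above then produce nonzero $v,w$ with $v\perp w$ and $\langle v,u_i\rangle\langle w,u_i\rangle=0=\langle v,w\rangle$ for every $i$, hence $\|P_ix\|=\|P_iy\|$ for all $i$ while $x\ne\pm y$. So $m\le 2n-3$ cannot do phase retrieval, and at least $2n-2$ hyperplanes are required.

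The step I expect to be most delicate is the second one: the asymmetry between $|S|\le n-1$ and $|T|\le n-2$ — and therefore the precise threshold $2n-3$ rather than $2n-2$ — arises entirely from the single dimension lost when intersecting with $v^{\perp}$ to enforce orthogonality. Restricting to orthogonal $v,w$ is exactly what makes the argument clean (we need only \emph{some} counterexample, not a description of the full solution set of the quadratic system), and the only point to verify carefully is that this one dimension of slack is genuinely available, which is what the bound $|T|\le n-2$ guarantees.
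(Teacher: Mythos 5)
Your proposal is correct, and every step checks out: the polarization identity $\|P_ix\|^2-\|P_iy\|^2=\langle v,w\rangle-\langle v,u_i\rangle\langle w,u_i\rangle$ with $v=x-y$, $w=x+y$ is right; restricting to $v\perp w$ correctly reduces the system to ``for each $i$, $v\perp u_i$ or $w\perp u_i$''; and the dimension counts are sound (a nonzero $v$ exists when $|S|\le n-1$, a nonzero $w\in\left(\mathrm{span}\{u_i\}_{i\in T}\right)^{\perp}\cap v^{\perp}$ exists when $|T|\le n-2$, and $(n-1)+(n-2)=2n-3$ covers every $m\le 2n-3$). Be aware, though, that the paper itself gives no proof of this statement --- it is quoted from \cite{ACCHTTX17} --- so there is no in-paper argument to compare against; your proof is essentially the standard complement-property-style construction from that reference, producing an explicit pair $x=\frac{1}{2}(w+v)$, $y=\frac{1}{2}(w-v)$ with $x\ne\pm y$ but equal projection norms. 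It is worth noting that the paper's own toolkit yields a slightly shorter packaging of the same counting: by Theorem~\ref{T14}, phase retrieval forces $\mathrm{span}\{P_iw\}_{i=1}^m=\mathbb{R}^n$ for every $w\ne 0$; if $m\le 2n-3$, pick $w\ne 0$ orthogonal to $n-1$ of the normals, so that $P_iw=w$ for those indices and hence $\dim \mathrm{span}\{P_iw\}_{i=1}^m\le 1+(n-2)=n-1<n$, a contradiction. Both routes hinge on the identical split $(n-1)+(n-2)$; yours has the advantage of being self-contained (no appeal to Theorem~\ref{T14}) and of exhibiting a concrete counterexample pair, while the span-criterion version is quicker if one grants that theorem.
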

\begin{theorem}\label{T14}\cite{Ed2017}
  A family of projections $\{P_i\}_{i=1}^m$ does phase retrieval in $\mathbb{R}^n$ if and only if for every $0 \ne x \in \mathbb{R}^n$, $span\{P_ix\}_{i=1}^m=\mathbb{R}^n$.
\end{theorem}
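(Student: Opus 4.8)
The plan is to translate the isometry condition $\|P_ix\|=\|P_iy\|$ into a bilinear condition via the substitution $u=x+y,\ v=x-y$, and then recognize $span\{P_iu\}_{i=1}^m=\mathbb{R}^n$ as precisely the obstruction to manufacturing a counterexample to phase retrieval. The argument is the projection analogue of the complement-property characterization (Theorem \ref{T:phase retrievality and complement property for finite case}).

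First I would record the key algebraic identity. Since each $P_i$ is an orthogonal projection it is self-adjoint and idempotent, so $\|P_ix\|^2=\langle P_ix,x\rangle$; using in addition the symmetry of the real inner product, the cross terms cancel and one gets
\[
\|P_ix\|^2-\|P_iy\|^2=\langle P_i(x+y),\,x-y\rangle .
\]
Writing $u=x+y$ and $v=x-y$, this shows that the system $\|P_ix\|=\|P_iy\|$ for all $i\in[m]$ is equivalent to $\langle P_iu,v\rangle=0$ for all $i\in[m]$, i.e. to $v\perp span\{P_iu\}_{i=1}^m$. Next I would note that the linear change of variables $(x,y)\mapsto(u,v)=(x+y,x-y)$ is a bijection of $\mathbb{R}^n\times\mathbb{R}^n$ onto itself under which the relation $x=\pm y$ corresponds exactly to $v=0$ or $u=0$. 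This bijection is what turns the reduction into an equivalence rather than a single implication.

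With this dictionary in place I would package the two directions. For the forward direction I argue contrapositively: if $span\{P_iu\}_{i=1}^m\neq\mathbb{R}^n$ for some $u\neq 0$, choose a nonzero $v$ in its orthogonal complement; then $x=\tfrac12(u+v),\ y=\tfrac12(u-v)$ satisfy $\langle P_iu,v\rangle=0$ for all $i$, hence $\|P_ix\|=\|P_iy\|$ for all $i$, yet $x\neq\pm y$ since neither $u$ nor $v$ vanishes, so phase retrieval fails. For the converse, suppose $span\{P_iu\}_{i=1}^m=\mathbb{R}^n$ for every $u\neq 0$; then whenever $\|P_ix\|=\|P_iy\|$ for all $i$ we have $v\perp span\{P_iu\}_{i=1}^m$, and a nonzero $v$ cannot be orthogonal to a spanning set, forcing $u=0$ or $v=0$, i.e. $x=\pm y$.

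The computation is routine; the only step requiring genuine care is the identity in the first paragraph, where I must invoke simultaneously that $P_i$ is self-adjoint and idempotent and that the scalar field is real. Specifically, the cancellation of $-\langle P_ix,y\rangle+\langle P_iy,x\rangle$ relies on $\langle P_ix,y\rangle=\langle x,P_iy\rangle=\langle P_iy,x\rangle$, which uses symmetry of the real inner product; over $\mathbb{C}$ these terms would instead be complex conjugates and the clean real identity would break, so I would flag that the real setting is essential here.
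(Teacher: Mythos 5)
Your proof is correct and complete, but there is nothing in the paper to compare it against: the paper states this theorem as a citation to \cite{Ed2017} and explicitly defers to \cite{PP} (``An elementary proof of a fundamental result in phase retrieval'') for an elementary argument, giving no proof of its own. Your argument is precisely that elementary route. The polarization identity $\|P_ix\|^2-\|P_iy\|^2=\langle P_i(x+y),x-y\rangle$ is justified correctly (self-adjointness and idempotence give $\|P_ix\|^2=\langle P_ix,x\rangle$, and symmetry of the real inner product kills the cross terms), the substitution $u=x+y$, $v=x-y$ is a bijection under which $x=\pm y$ corresponds to $uv$-degeneracy, and both directions then follow cleanly: failure of spanning for some $u\neq 0$ manufactures a counterexample $x=\frac12(u+v)$, $y=\frac12(u-v)$ with $x\neq\pm y$, while spanning for all $u\neq 0$ forces $v=0$ or $u=0$ whenever all the measurements agree. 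Your closing remark is also well taken: the identity, and hence the equivalence, is special to real scalars, which is why the complex analogue of this characterization is a much harder problem. One stylistic note: this is the projection analogue of the complement property (Theorem \ref{T:phase retrievality and complement property for finite case}) in spirit, as you say, but your proof does not actually invoke that theorem, which is the right call since the vector version is the special case of rank-one $P_i$ rather than an ingredient.
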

See \cite{PP} for an elementary proof of Theorem \ref{T14}.
\begin{theorem}\label{T12}
If $\{x_i\}_{i=1}^2$ does weak phase retrieval in $\RR^2$ then $\{x_1^{\perp},x_2^{\perp}\}$ do weak
phase retrieval in $\RR^2$. 
\end{theorem}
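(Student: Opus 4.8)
The plan is to leverage the complete classification of two-element weak phase retrievable frames in $\RR^2$ recorded in Theorem \ref{T10}: I will apply it to $\{x_1,x_2\}$ to pin down their normalized form, compute the orthogonal complement lines explicitly, observe that these complements again land in the classified family, and then apply Theorem \ref{T10} a second time to conclude. The key observation that makes the whole argument short is that passing from $\{x_1,x_2\}$ to $\{x_1^\perp,x_2^\perp\}$ and renormalizing corresponds to the map $b\mapsto 1/b$, which preserves the shape $\{(1,b),(1,-b)\}$.

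First I would record the two invariance facts that justify normalizing. Because weak phase retrieval only constrains vectors through equalities of absolute frame coefficients, and $|\langle x,\lambda x_i\rangle|=|\lambda|\,|\langle x,x_i\rangle|$, replacing any $x_i$ by a nonzero scalar multiple (in particular a positive rescaling or a sign flip) leaves weak phase retrieval unchanged. By Lemma \ref{T:X5} a weak phase retrievable pair cannot contain a vector with a zero coordinate, so I may normalize each vector to have first coordinate $1$. Theorem \ref{T10} then gives that, after this normalization, $x_1=(1,b)$ and $x_2=(1,-b)$ for some $b>0$, the case $b=1$ being the special form $(1,1),(1,-1)$.

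Next I would compute the complements. Since a vector perpendicular to $(1,b)$ is $(-b,1)$ and a vector perpendicular to $(1,-b)$ is $(b,1)$, the lines $x_1^\perp$ and $x_2^\perp$ are spanned by $(-b,1)$ and $(b,1)$ respectively. It is worth flagging the mild point that the conclusion is phrased via projections onto the hyperplanes $x_i^\perp$: in $\RR^2$ these are one-dimensional subspaces, and for a unit spanning vector $u_i$ one has $\|P_{x_i^\perp}x\|=|\langle x,u_i\rangle|$, so weak phase retrieval by the projections $\{P_{x_1^\perp},P_{x_2^\perp}\}$ is literally the same condition as weak phase retrieval by the spanning vectors. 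Thus it suffices to show the spanning vectors do weak phase retrieval.

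Finally, normalizing the spanning vectors to first coordinate $1$ (dividing by $-b$ and $b$, both nonzero) yields $(1,-1/b)$ and $(1,1/b)$; writing $c=1/b>0$ this is exactly the classified form $(1,c),(1,-c)$, and Theorem \ref{T10} applies once more to give weak phase retrieval for $\{x_1^\perp,x_2^\perp\}$. I do not expect a genuine obstacle here, since the argument is a single computation once Theorem \ref{T10} is available; the only care needed is the bookkeeping of the normalized forms and the verification that the complement-plus-renormalization step keeps us inside the family $\{(1,b),(1,-b)\}$, which amounts to the self-map $b\mapsto 1/b$.
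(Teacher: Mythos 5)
Your proposal is correct and is essentially the paper's own proof: both apply Theorem \ref{T10} to reduce to the normalized form $(1,b),(1,-b)$, compute the perpendicular lines $(-b,1),(b,1)$, renormalize to $(1,1/b),(1,-1/b)$, and invoke Theorem \ref{T10} again. Your extra remarks (scale/sign invariance of weak phase retrieval and the identification $\|P_{x_i^\perp}x\|=|\langle x,u_i\rangle|$ for a unit spanning vector $u_i$) are sound and only make explicit what the paper leaves implicit.
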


\begin{proof}
By Theorem \ref{T10} we have two cases: $(1,1),\ (1,-1)$ or $(1,b),\ (1,-b)$. In the first case the perps are also
$(1,1),\ (1,-1)$ and they do weak phase retrieval and in the second case the perps are $(-b,1),\ (b,1)$ which after
normalization are $(1,c),\ (1,-c)$ and this set does weak phase retrieval.
\end{proof}

\begin{example}
,By  \cite{ACGJT2016} the vectors $$\phi_1=(1,1,1),\phi_2=(-1,1,1),\phi_3=(1,-1,1),\phi_4=(1,1,-1)$$ do weak phase retrieval for $\mathbb{R}^3$.

  Let $\{W_i\}_{i=1}^4=\{\phi_i^{\perp}\}_{i=1}^4$ be the set of $4$ hyperplanes so that \\
 $$W_1=\{\phi_1^{\perp}\}=\{(x_1,x_2,x_3) \in \mathbb{R}^3 : x_1+x_2+x_3=0\}$$
  $$W_2=\{\phi_2^{\perp}\}=\{(x_1,x_2,x_3) \in \mathbb{R}^3 : -x_1+x_2+x_3=0\}$$
  $$W_3=\{\phi_3^{\perp}\}=\{(x_1,x_2,x_3) \in \mathbb{R}^3 : x_1-x_2+x_3=0\}$$
  $$W_4=\{\phi_4^{\perp}\}=\{(x_1,x_2,x_3) \in \mathbb{R}^3 : x_1+x_2-x_3=0\}$$
  Let $P_i$ be the orthogonal projection onto $W_i$. then for any $x=(a,b,c)$ we have
  $$P_1(a,b,c)=\left ( a-\frac{a+b+c}{3},b-\frac{a+b+c}{3},c-\frac{a+b+c}{3}\right )$$
  $$P_2(a,b,c)=\left ( -a-\frac{-a+b+c}{3},b-\frac{-a+b+c}{3},c-\frac{-a+b+c}{3}\right )$$
   $$P_3(a,b,c)=\left ( a-\frac{a-b+c}{3},-b-\frac{a-b+c}{3},c-\frac{a-b+c}{3}\right )$$
   $$P_4(a,b,c)=\left ( a-\frac{a+b-c}{3},b-\frac{(a+b-c)}{3},-c-\frac{a+b-c}{3}\right )$$
   It is easy to check that  $span\{P_i(1,1,1)\}_{i=1}^4 \ne \mathbb{R}^3$ and by Theorem \ref{T: T14},  $\{W_i\}_{i=1}^4$ can not do phase retrieval in $\mathbb{R}^3$. But these hyperplanes do weak phase retrieval. The proof of this is straightforward
   but labor intensive so we leave it to the reader.
   
   \end{example}

Even if  $\{x_i\}_{i=1}^{2n-2}$ is not  a full spark frame in $\mathbb{R}^n$,  the hyperplanes $\{W_i\}_{i=1}^{2n-2}=\{x_i^{\perp}\}_{i=1}^{2n-2}$  may still do weak phase retrieval by projections.
\begin{example} Consider the frame $\{x_i\}_{i=1}^4$ with vectors  $$x_1=(1,1,0),x_2=(-1,0,1),x_3=(1,-1,0),x_4=(0,1,-1)$$
  Then the frame $\{x_i\}_{i=1}^4$ is not full spark since $x_4=-x_2-x_3$. Also this frame does not do weak phase retrieval for $\mathbb{R}^3$. Take $x=(-2,-1,0)$ and $y=(1,2,3)$, then $|\langle x,x_i \rangle|=|\langle y,x_i \rangle|$ for $i \in [4]$, but $x$ and $y$ do not weakly  have the same phase.\\
  Let  $\{W_i\}_{i=1}^{4}=\{\phi_i^{\perp}\}_{i=1}^{4}$   so that:
  $$W_1=\{\phi_1^{\perp}\}=\{(x_1,x_2,x_3) \in \mathbb{R}^3 : x_1+x_2=0\}$$
  $$W_2=\{\phi_2^{\perp}\}=\{(x_1,x_2,x_3) \in \mathbb{R}^3 : -x_1+x_3=0\}$$
  $$W_3=\{\phi_3^{\perp}\}=\{(x_1,x_2,x_3) \in \mathbb{R}^3 : x_1-x_2=0\}$$
  $$W_4=\{\phi_4^{\perp}\}=\{(x_1,x_2,x_3) \in \mathbb{R}^3 : x_2-x_3=0\}.$$
  Let $P_i$ be the orthogonal projection onto $W_i$. then for any $x=(a,b,c)$ we define \\
  \[ P_1(a,b,c)=\left ( \frac{a-b}{2},\frac{b-a}{2},c\right )\]
  $$P_2(a,b,c)=\left (\frac{a+c}{2},b, \frac{a+c}{2}\right )$$
   $$P_3(a,b,c)=\left (\frac{a+b}{2},\frac{a+b}{2},c\right )$$
   $$P_4(a,b,c)=\left (a,\frac{b+c}{2},\frac{b+c}{2}\right ).$$
  then these $4$ hyperplanes do weak phase retrieval. Again, we leave the straightforward proof to the reader.
\end{example}

It is known that it takes 2n-1 vectors to do phase retrieval \cite{CCPW13}. Also, phase retrieval can be done with any
family of 2n-1 projections of arbitrary rank between one and n-1 \cite{CCPW13}. There is only one case (n=4) where we 
know we can
do phase retrieval with 2n-2 projections \cite{ACCHTTX17}. We don't know the minimal number of projections it
takes to do phase retrieval or weak phase retrieval.
It is also known that when $n=2^k-1$ then it takes 2n-1 subspaces
to do phase retrieval \cite{Ed2017}. All other cases are open.

\section{Existence of Weak Phase Retrievable Frames And Projections}

Weak phase retrieval does not pass to the orthogonal complement hyperplanes.

\begin{example}
In \cite{ACCHTTX17}There is an example in $\RR^3$: 
\[ x_1=(0,0,1),\ x_2=(1,0,1),\ x_3=(0,1,1),\ x_4=(1,1-\sqrt{2},2),\ x_5=(1,1,1).\]
Since these 5 vectors are full spark, $\{x_i\}_{i=1}^5$ does phase retrieval in $\RR^3$.
They observe that for the projections $\{P_i\}_{i=1}^5$ onto the orthogonal complement hyperplanes,
$span\{P_i(x_5)\}_{i=1}^{5}=span[e_1,e_2]$. It follows that
\[ \langle P_i(x_5),2e_3\rangle = \langle P_i(x_5),P_i(2e_3)\rangle =\langle P_i(x_5),-P_i(2e_3)\rangle=0,\mbox{ for all }i=1,2,\ldots,5.\]
Hence,
\[ \|P_i(x_5+2e_3)\|^2=\|P_i(x_5)\|^2+\|P_i(2e_3)\|^2=\|P_i(x_5-2e_3)\|^2.
\]
But $x_5+2e_3=(1,1,3)$ and $x_5-2e_3)=(1,1,-1)$. Since these vectors don't have the same or opposite signs, $\{P_i\}_{i=1}^5$
fails weak phase retrieval.
\end{example}

What is needed here is norm retrieval.
Our first theorem is known for phase retrieval \cite{CCPW13}.

\begin{theorem}
If $\{P_i\}_{i=1}^m$ does weak phase retrieval and $\{(I-P)x_i\}_{i=1}^m$ does norm retrieval, then $\{(I-P_i)\}_{i=1}^m$
does weak phase retrieval.
\end{theorem}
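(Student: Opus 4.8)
The plan is to reduce weak phase retrieval for the complementary family $\{I-P_i\}_{i=1}^m$ to the hypothesis that $\{P_i\}_{i=1}^m$ already does weak phase retrieval, exploiting the Pythagorean decomposition $\|P_ix\|^2+\|(I-P_i)x\|^2=\|x\|^2$ together with the norm retrieval assumption to control the otherwise unknown norm term. (Here I read the norm retrieval hypothesis as applying to the complementary family $\{I-P_i\}_{i=1}^m$, since that is what the argument requires.) The scheme is to pass from the test data for $\{I-P_i\}$ to the test data for $\{P_i\}$, invoke weak phase retrieval of $\{P_i\}$, and observe that its conclusion is exactly the conclusion we want.

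First I would take $x,y\in\RR^n$ satisfying $\|(I-P_i)x\|=\|(I-P_i)y\|$ for all $i\in[m]$; this is precisely the data one must start from to test whether $\{I-P_i\}_{i=1}^m$ does weak phase retrieval. Since $\{I-P_i\}_{i=1}^m$ does norm retrieval and these equalities hold for every $i$, norm retrieval immediately yields $\|x\|=\|y\|$. The second step is the key algebraic identity: for each $i$, the orthogonal decomposition $\RR^n=W_i\oplus W_i^{\perp}$ gives $\|P_ix\|^2=\|x\|^2-\|(I-P_i)x\|^2$ and likewise for $y$. Subtracting these, and feeding in both $\|x\|=\|y\|$ from the first step and $\|(I-P_i)x\|=\|(I-P_i)y\|$ from the hypothesis, I obtain $\|P_ix\|^2=\|P_iy\|^2$, hence $\|P_ix\|=\|P_iy\|$, for every $i\in[m]$.

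Finally, since $\{P_i\}_{i=1}^m$ does weak phase retrieval and $\|P_ix\|=\|P_iy\|$ holds for all $i$, Definition \ref{D: Real weak phase retrieval by projection} forces $x$ and $y$ to weakly have the same phase. As this is exactly the conclusion required for $\{I-P_i\}_{i=1}^m$, the proof is complete.

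The only genuine subtlety, and the point I would flag as the main obstacle, is the role of the norm retrieval hypothesis. Without it, the Pythagorean identity only yields $\|P_ix\|^2-\|P_iy\|^2=\|x\|^2-\|y\|^2$, a quantity that is independent of $i$ but not known to vanish; one could not then conclude $\|P_ix\|=\|P_iy\|$ and the reduction would break down. Norm retrieval is precisely what collapses this common defect to zero, and the example immediately preceding the theorem (where $\{P_i\}_{i=1}^5$ fails weak phase retrieval on the orthogonal complement hyperplanes) shows that some such extra hypothesis is genuinely unavoidable.
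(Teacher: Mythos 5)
Your proof is correct and follows essentially the same route as the paper: norm retrieval of the complementary family gives $\|x\|=\|y\|$, the Pythagorean identity $\|P_ix\|^2+\|(I-P_i)x\|^2=\|x\|^2$ then forces $\|P_ix\|=\|P_iy\|$ for all $i$, and weak phase retrieval of $\{P_i\}_{i=1}^m$ finishes the argument. Your reading of the (typo-ridden) hypothesis as ``$\{I-P_i\}_{i=1}^m$ does norm retrieval'' is exactly how the paper's own proof uses it.
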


\begin{proof}
Assume $x,y\in \RR^n$ and $\|(I-P)_ix\|=\|(I-P_i)y\|$ for all $i=1,2,\ldots,m$. By assumption, $\|x\|=\|y\|$. Hence,
\[ \|x\|^2=\|P_ix\|^2+\|(I-P_i)x\|^2=\|y\|^2=\|P_iy\|^2+\|(I-P_i)y\|^2.\]
It follows that $\|P_ix\|=\|P_iy\|$ for all $i=1,2,\ldots,m$. Since $\{P_i\}_{i=1}^m$ does weak phase retrieval, $x,y$
have the same phase. So $\{(I-P_i)\}_{i=1}^m$ does weak phase retrieval.
\end{proof}

If a frame does phase retrieval, it does norm retrieval.
But, in general, a frame can do weak phase retrieval but fail norm retrieval. Also, it is possible that $\{P_i\}_{i=1}^m$
and $\{(I-P_i)\}_{i=1}^m$ both do weak phase retrieval but neither does norm retrieval. 

\begin{example}
In $\RR^2$ let $x_1=(1,b),\ x_2=(1,-b)$ with $b\not= 1$. By Theorem \ref{T10} this family does weak phase retrieval.
Let $x=(1,1),\ y=(b,\frac{1}{b})$. Then
\[ \langle x,x_1\rangle = 1+b\mbox{ and } \langle y,x_1\rangle = b+b\frac{1}{b}=1+b.\]
Also,
\[ |\langle x,x_2\rangle|=1-b\mbox{ and }|\langle y,x_2\rangle|= |b-b\frac{1}{b}|=|b-1|.\]
Since $b\not= 1$, $\|x\|\not= \|y\|$.

Note also that if $P_i$ projects on $x_i$ then $\{P_i\}_{i=1}^2$ does weak phase retrieval and $(I-P_i)$ projects
on $x_i^{\perp}$ and so also does weak phase retrieval by Theorem \ref{T12}.
\end{example}

For the next theorem we need a result from \cite{ACNT2015}.

\begin{theorem}\label{T51}
Let $x=(a_1,a_2,\ldots,a_n),\ y=(b_1,b_2,\ldots,b_n)\in \RR^n$. If there exists an $i\in [n]$ so that $a_ib_i\not= 0$ and
$\langle x,y\rangle =0$, then $x,y$ do not weakly have the same signs or opposite signs.
\end{theorem}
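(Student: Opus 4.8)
The plan is to argue by contradiction, using nothing more than the single scalar identity $\langle x,y\rangle = \sum_{k=1}^n a_k b_k = 0$ together with the hypothesis that there is one index $i$ with $a_i b_i \neq 0$. The key observation is that orthogonality forces the surviving terms of the inner product to have mixed signs, which is incompatible with a uniform sign relationship between $x$ and $y$.

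First I would suppose that $x$ and $y$ weakly have the same signs. By Definition \ref{D:Weakly have the same phase} with $\theta = 1$, this means $sgn(a_k)=sgn(b_k)$ at every index $k$ for which $a_k \neq 0 \neq b_k$; equivalently, $a_k b_k > 0$ whenever both coordinates are nonzero. Next I would split the inner product according to which coordinates vanish: every $k$ with $a_k=0$ or $b_k=0$ contributes $a_k b_k = 0$, while every remaining term is strictly positive by the previous step. In particular the guaranteed index $i$ contributes $a_i b_i > 0$, so $\langle x,y\rangle = \sum_{k:\, a_k b_k \neq 0} a_k b_k \ge a_i b_i > 0$, contradicting $\langle x,y\rangle = 0$.

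The opposite-signs case is symmetric. Taking $\theta = -1$ gives $a_k b_k < 0$ at every index where both coordinates are nonzero, and the same splitting yields $\langle x,y\rangle \le a_i b_i < 0$, again contradicting orthogonality. Alternatively, one can simply apply the same-signs case to the pair $x,\,-y$, which is still orthogonal and still satisfies $a_i(-b_i) \neq 0$, so the opposite-signs case reduces immediately to the one already handled. Since neither alternative of Definition \ref{D:Weakly have the same phase} can hold, $x$ and $y$ neither weakly have the same signs nor weakly have the opposite signs.

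I do not expect a genuine obstacle here; the proof is essentially one computation. The only point that needs care is the bookkeeping over vanishing coordinates: one must note that indices with a zero coordinate drop out of the inner product entirely, and that the hypothesis $a_i b_i \neq 0$ guarantees at least one strictly signed surviving term, so the sum cannot be zero. This is precisely the place where the assumption $a_i b_i \neq 0$ is essential—without a common nonzero coordinate the inner product could vanish term by term and no contradiction would arise.
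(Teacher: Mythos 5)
Your proof is correct and complete: splitting $\langle x,y\rangle=\sum_k a_kb_k$ into vanishing terms and strictly one-signed terms, with the hypothesis $a_ib_i\neq 0$ guaranteeing at least one surviving term, immediately contradicts orthogonality in both the same-sign and opposite-sign cases. Note that the paper states Theorem \ref{T51} as a quoted result from \cite{ACNT2015} and gives no proof of it, so there is no in-paper argument to compare against; your computation is the standard (and essentially the only) proof of this fact.
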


\begin{theorem}\label{T52}
If $\{x_i\}_{i=1}^{2n-2}$ does weak phase retrieval in $\RR^n$ then for every $I\subset [2n-2]$ with $|I|=n-1$,
if $x\perp span\{x_i\}_{i\in I}$ and $y\perp \{x_i\}_{i\in I^c}$ then $\frac{x}{\|x\|}+\frac{y}{\|y\|}$ and $\frac{x}{\|x\|}-\frac{y}{\|y\|}$ are disjointly
supported. In particular, if $\|x\|=\|y\|=1$ then $x+y$ and $x-y$ are disjointly supported.
\end{theorem}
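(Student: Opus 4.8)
The plan is to feed the pair of test vectors $u+v$ and $u-v$, where $u=x/\|x\|$ and $v=y/\|y\|$, into the weak phase retrieval hypothesis and then extract a contradiction from orthogonality unless their supports are disjoint. First I would check that the normalizations make sense: by Theorem \ref{T:weak is full 1} the frame is full spark, so each of the $(n-1)$-element families $\{x_i\}_{i\in I}$ and $\{x_i\}_{i\in I^c}$ is linearly independent and spans an $(n-1)$-dimensional subspace whose orthogonal complement is one-dimensional. Hence nonzero $x$ and $y$ exist (and are unique up to a scalar), so $u$ and $v$ are genuine unit vectors.

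Next I would compute the frame coefficients of $u\pm v$. Since $x\perp \mathrm{span}\{x_i\}_{i\in I}$ we have $\langle u,x_i\rangle=0$ for $i\in I$, and since $y\perp\mathrm{span}\{x_i\}_{i\in I^c}$ we have $\langle v,x_i\rangle=0$ for $i\in I^c$. Therefore, for $i\in I$ both $\langle u+v,x_i\rangle$ and $\langle u-v,x_i\rangle$ reduce to $\pm\langle v,x_i\rangle$, while for $i\in I^c$ both equal $\langle u,x_i\rangle$. In either case $|\langle u+v,x_i\rangle|=|\langle u-v,x_i\rangle|$ for every $i\in[2n-2]$, so the weak phase retrieval property forces $u+v$ and $u-v$ to weakly have the same phase, i.e.\ weakly the same signs or weakly the opposite signs.

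The crucial observation is that $\langle u+v,u-v\rangle=\|u\|^2-\|v\|^2=0$, so $u+v$ and $u-v$ are orthogonal; this is exactly where the normalization earns its keep, since it is $\|u\|=\|v\|$ that makes $u+v\perp u-v$. Now I would invoke Theorem \ref{T51}: if $u+v$ and $u-v$ shared a coordinate $k$ with $(u+v)(k)(u-v)(k)\neq 0$, then being orthogonal they could not weakly have the same signs or opposite signs, contradicting the previous paragraph. Hence no such coordinate exists, which is precisely the statement that $u+v$ and $u-v$ are disjointly supported. The ``in particular'' claim is then immediate, since $\|x\|=\|y\|=1$ gives $u=x$ and $v=y$. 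I do not expect a serious obstacle here; the only points requiring care are verifying the symmetry of the coefficient computation in $I$ and $I^c$ and recording that $\langle u+v,u-v\rangle=0$ is exactly the hypothesis that lets Theorem \ref{T51} deliver the contradiction.
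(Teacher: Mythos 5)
Your proposal is correct and follows essentially the same route as the paper's proof: test the pair $\frac{x}{\|x\|}+\frac{y}{\|y\|}$, $\frac{x}{\|x\|}-\frac{y}{\|y\|}$ against the frame, note the coefficient moduli agree on both $I$ and $I^c$, observe these two vectors are orthogonal since $\|u\|=\|v\|$, and invoke Theorem \ref{T51} to conclude disjoint supports. Your additional remark that full spark (Theorem \ref{T:weak is full 1}) guarantees the existence of nonzero $x$ and $y$ is a harmless refinement the paper omits.
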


\begin{proof}
We have
\[ \langle \frac{x}{\|x\|}-\frac{y}{\|y\|},\frac{x}{\|x\|}+\frac{y}{\|y\|}\rangle = \|\frac{x}{\|x\|}\|^2-
\langle \frac{y}{\|y\|},\frac{x}{\|x\|}\rangle +\langle \frac{x}{\|x\|},\frac{y}{\|y\|}\rangle -\|\frac{y}{\|y\|}\|^2=0.\]
For $i\in I$,
\[ |\langle \frac{x}{\|x\|}+\frac{y}{\|y\|},x_i\rangle|=|\langle \frac{y}{\|y\|},x_i\rangle|=|\langle \frac{-y}{\|y\|},x_i\rangle| =
|\langle \frac{x}{\|x\|}-\frac{y}{\|y\|},x_i\rangle|\]
For $i\in I^c$,
\[ |\langle \frac{x}{\|x\|}+\frac{y}{\|y\|},x_i\rangle|=|\langle \frac{x}{\|x\|},x_i\rangle|=
|\langle \frac{x}{\|x\|}-\frac{y}{\|y\|},x_i\rangle|\]

Since $\{x_i\}_{i=1}^{2n-2}$ does weak phase retrieval, by Theorem \ref{T51}, $\frac{x}{\|x\|}+\frac{y}{\|y\|}$ and
$\frac{x}{\|x\|}-\frac{y}{\|y\|}$ are disjointly supported.
\end{proof}

The following theorem shows how close weak phase retrieval is to phase retrieval.
For notation, if $x\in\RR^n$ and $I\subset [1,n]$ then $x_{I}=\sum_{i\in I}x(i)e_i$.

\begin{theorem}
Let $\{x_i\}_{i=1}^{2n-2}$ be full spark vectors in $\RR^n$. The following are equivalent:
\begin{enumerate}
\item $\{x_i\}_{i=1}^{2n-2}$ does weak phase retrieval.
\item If $|\langle x,x_i\rangle|=|\langle y,x_i\rangle|$ for all $i=1,2,\ldots,2n-2$ then there is an $0 \ne a\in \RR$
so that for every $i=1,2,\ldots,2n-1$ either $x(i)=ay(i)$ or $x(i)=\frac{1}{a}y(i)$. I.e. $x=ay_{I}+\frac{1}{a}y_{I^c}$
for some $I\subset [1,n]$.
\end{enumerate}
\end{theorem}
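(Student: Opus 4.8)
The plan is to treat the two implications separately; $(2)\Rightarrow(1)$ is a direct sign computation, while $(1)\Rightarrow(2)$ is the substantive direction and rests on Theorem~\ref{T52}.

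For $(2)\Rightarrow(1)$, suppose $x,y$ satisfy $|\langle x,x_i\rangle|=|\langle y,x_i\rangle|$ for all $i$, and let $a\ne 0$ and the partition $[1,n]=I\cup I^c$ be as provided by $(2)$, so $x(i)=ay(i)$ on $I$ and $x(i)=\tfrac1a y(i)$ on $I^c$. I would verify the criterion of Proposition~\ref{D:pp}: for indices $i\ne j$ the product $x(i)x(j)$ equals $a^2y(i)y(j)$, $a^{-2}y(i)y(j)$, or $y(i)y(j)$ according as $i,j\in I$, $i,j\in I^c$, or the two indices are split between $I$ and $I^c$. In every case the scalar multiplier is positive, so $\mathrm{sgn}(x(i)x(j))=\mathrm{sgn}(y(i)y(j))$ for all $i\ne j$, and Proposition~\ref{D:pp} then gives that $x,y$ weakly have the same phase. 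Hence the frame does weak phase retrieval.

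For $(1)\Rightarrow(2)$, fix $x,y$ with $|\langle x,x_i\rangle|=|\langle y,x_i\rangle|$ for all $i$ and set $u=x-y$, $v=x+y$. For each $i$ either $\langle u,x_i\rangle=0$ or $\langle v,x_i\rangle=0$, so I split $[2n-2]=I\cup I^c$ with $u\perp\{x_i\}_{i\in I}$ and $v\perp\{x_i\}_{i\in I^c}$. Since the frame is full spark, any $n$ of the $x_i$ span $\RR^n$; thus $|I|\ge n$ forces $u=0$ (so $x=y$) and $|I^c|\ge n$ forces $v=0$ (so $x=-y$), both of which satisfy $(2)$ with $a=\pm1$. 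The genuine case is $|I|=|I^c|=n-1$, where $u$ and $v$ each lie in a one-dimensional orthocomplement of $n-1$ independent vectors. Assuming $u,v\ne 0$ (otherwise $x=\pm y$ and $(2)$ holds trivially), Theorem~\ref{T52} applies and shows that $\tfrac{u}{\|u\|}+\tfrac{v}{\|v\|}$ and $\tfrac{u}{\|u\|}-\tfrac{v}{\|v\|}$ are disjointly supported; coordinatewise this is exactly the statement that $u(k)=\pm t\,v(k)$ for every $k$, where $t=\|u\|/\|v\|$. Writing $x=\tfrac{u+v}{2}$, $y=\tfrac{v-u}{2}$ and setting $J=\{k:u(k)=t\,v(k)\}$, I obtain $x(k)=\tfrac{1+t}{2}v(k)$, $y(k)=\tfrac{1-t}{2}v(k)$ on $J$ and the two factors interchanged on $J^c$. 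Putting $a=\tfrac{1+t}{1-t}$ (nonzero since $t>0$) then yields $x(k)=a\,y(k)$ on $J$ and $x(k)=\tfrac1a y(k)$ on $J^c$, i.e. $(2)$ with $I=J$.

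The step I expect to be the main obstacle is the borderline value $t=1$, equivalently $\|u\|=\|v\|$, equivalently $\langle x,y\rangle=0$, where the formula for $a$ degenerates. Here I would invoke Theorem~\ref{T51}: were there a coordinate $k$ with $x(k)y(k)\ne 0$, orthogonality of $x$ and $y$ would prevent them from weakly having the same or opposite signs, contradicting $(1)$. Thus $x$ and $y$ are forced to be disjointly supported, and the delicate work is to pin down precisely what $(2)$ asserts in this orthogonal regime (together with the bookkeeping when some $v(k)=0$, which forces $u(k)=0$ as well). Handling this degenerate case — where the clean proportionality structure can fail — is the real difficulty; the generic regime $t\ne 1$ is settled cleanly by the computation above.
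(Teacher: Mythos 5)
Your proposal follows the paper's own proof essentially step for step: the same sign computation for $(2)\Rightarrow(1)$ (the paper just says $a>0$ gives the same signs and $a<0$ opposite signs; your check via Proposition \ref{D:pp} is the careful version of this), and, for $(1)\Rightarrow(2)$, the same decomposition $u=x-y$, $v=x+y$, the same full-spark dichotomy reducing to $|I|=|I^c|=n-1$, and the same application of Theorem \ref{T52}. Your constant $a=\frac{1+t}{1-t}$ with $t=\|x-y\|/\|x+y\|$ is the reciprocal of the paper's constant, which simplifies to $\frac{\|x+y\|-\|x-y\|}{\|x+y\|+\|x-y\|}$; the two are equivalent after exchanging $I$ and $I^c$. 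In the generic regime $t\neq 1$ your argument is correct and complete, and it is the paper's argument.

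The degenerate case you flag, $t=1$, i.e.\ $\langle x,y\rangle=0$, is a genuine gap in your write-up: you correctly reduce it (via Theorem \ref{T51}) to the statement that $x$ and $y$ are disjointly supported, but you never produce the nonzero $a$ that $(2)$ demands. You could not have, because in that case $(2)$ is false: the theorem as printed is wrong, and the paper's proof silently breaks at exactly this point --- its $a$ equals $\frac{\|x+y\|-\|x-y\|}{\|x+y\|+\|x-y\|}$, which is $0$ when $x\perp y$, contradicting ``$0\neq a$'' and making $1/a$ meaningless, and the paper never checks this. Concretely, take the paper's own example of a weak phase retrievable frame in $\RR^3$: $\phi_1=(1,1,1)$, $\phi_2=(-1,1,1)$, $\phi_3=(1,-1,1)$, $\phi_4=(1,1,-1)$, which is full spark and does weak phase retrieval by \cite{ACGJT2016}, and let $x=e_1$, $y=e_2$. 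Then $|\langle x,\phi_i\rangle|=1=|\langle y,\phi_i\rangle|$ for all $i$, yet there is no $a\neq 0$ with $x(1)=ay(1)$ or $x(1)=\frac{1}{a}y(1)$, since $x(1)=1$ and $y(1)=0$. (In $\RR^2$ the frame $(1,b),(1,-b)$, $b\neq 1$, with $x=(b,0)$, $y=(0,1)$ does the same, using Theorem \ref{T10}.) The correct statement of $(2)$ must include the extra alternative ``or $x$ and $y$ are disjointly supported''; with that amendment, your generic-case computation together with your Theorem \ref{T51} observation constitutes a complete and correct proof. So your proof has the same hole as the paper's, but unlike the paper you identified it; the defect lies in the theorem, not in your argument.
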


\begin{proof}
$(1)\Rightarrow (2)$: If $|\langle x,x_i\rangle|=|\langle y,x_i\rangle|$ for all $i=1,2,\ldots,2n-2$
if $x=y$ or $x=-y$ (2) is immediate. Otherwise, let
\[ I = \{1\le i \le 2n-2:\langle x,x_i\rangle = \langle y,x_i\rangle\}.\]
So $x-y \perp span\{x_i\}_{i\in I}$. Similarly, $x+y\perp span\{x_i\}_{i\in I^c}.$
If $span\{x_i\}_{i\in I}=\RR^n$ then $x=y$ and if $span\{x_i\}_{i\in I^c}=\RR^n$ then $x=-y$ so (2) holds.
Otherwise, since the frame is full spark, it follows that $|I|=n-1$. By Theorem \ref{T52}, we have that
\[ \frac{x+y}{\|x+y\|}+\frac{x-y}{\|x-y\|}\mbox{ and } \frac{x+y}{\|x+y\|}-\frac{x-y}{\|x-y\|}\mbox{ are disjointly supported}.\]
So for every $i=1,2,\ldots,2n-2$,
\[ \mbox{either }(\frac{x+y}{\|x+y\|}+\frac{x-y}{\|x-y\|})(i)=0\mbox{ or } (\frac{x+y}{\|x+y\|}-\frac{x-y}{\|x-y\|})(i)=0.\]
If
\[ (\frac{x+y}{\|x+y\|}-\frac{x-y}{\|x-y\|})(i)=0
\]
then
\[ \left ( \frac{1}{\|x+y\|}+\frac{1}{\|x-y\|}\right )x(i)=\left ( \frac{-1}{\|x+y\|}+\frac{1}{\|x-y\|}\right )y(i)
\]
Let
\[ a=\frac{\left ( \frac{-1}{\|x+y\|}+\frac{1}{\|x-y\|}\right )}{\left ( \frac{1}{\|x+y\|}+\frac{1}{\|x-y\|}\right )}.\]
Then $x(i)=ay(i)$. In the second case:
\[ (\frac{x+y}{\|x+y\|}+\frac{x-y}{\|x-y\|})(i)=0,\]
and so
\[ \left ( \frac{1}{\|x+y\|}-\frac{1}{\|x-y\|}\right )x(i)=-\left ( \frac{1}{\|x+y\|}+\frac{1}{\|x-y\|}\right )y(i)
\]
So
\[ x(i) = \frac{-\left ( \frac{1}{\|x+y\|}+\frac{1}{\|x-y\|}\right )}{\left ( \frac{1}{\|x+y\|}-\frac{1}{\|x-y\|}\right )}y(i)=\frac{1}{a}y(i).\]
This proves (2)
\vskip12pt

$(2)\Rightarrow (1)$: If $|\langle x,x_i\rangle|=|\langle y,x_i\rangle|$ for all $i=1,2,\ldots,2n-2$ then by (2) there is
an $a\in \RR$ so that for every i, either $x(i)=ay(i)$ or $x(i)=\frac{1}{a}y(i)$. Hence, if $a>0$ then $x,y$ have the
same signs and if $a<0$ then $x,y$ have opposite signs.
\end{proof}

So the difference between weak phase retrieval and phase retrieval is that for phase retrieval we must have
that $a=\pm 1$ in the theorem.

\section{phase (norm) retrieval by projections}
Now we present some facts about phase (norm) retrieval by projections.
\begin{theorem} \label{T: t1}
In $\mathbb{R}^n$  two subspaces can do norm retrieval.
\end{theorem}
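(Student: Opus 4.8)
The plan is to prove this existence statement by exhibiting an explicit pair of subspaces, and the natural candidate is a subspace paired with its orthogonal complement. First I would fix any subspace $W \subseteq \RR^n$ and set $W_1 = W$, $W_2 = W^{\perp}$, with corresponding orthogonal projections $P_1 = P_{W_1}$ and $P_2 = P_{W_2}$. Because $W_2 = W_1^{\perp}$, these projections are complementary, i.e. $P_2 = I - P_1$, and this is the only structural feature the argument will use.

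The key step is the Pythagorean identity. For every $x \in \RR^n$ the decomposition $x = P_1 x + P_2 x$ is orthogonal, since $P_1 x \in W$ and $P_2 x \in W^{\perp}$; hence $\|x\|^2 = \|P_1 x\|^2 + \|P_2 x\|^2$. This identity already contains essentially the whole proof, and there is no genuine obstacle beyond recognizing that the second subspace must be chosen as the orthogonal complement of the first.

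To finish, I would verify Definition \ref{D:phase(norm) retrieval by proj} directly for the pair $\{P_1, P_2\}$. Suppose $x, y \in \RR^n$ satisfy $\|P_1 x\| = \|P_1 y\|$ and $\|P_2 x\| = \|P_2 y\|$. Squaring each equality, summing, and applying the Pythagorean identity to both $x$ and $y$ gives
$$\|x\|^2 = \|P_1 x\|^2 + \|P_2 x\|^2 = \|P_1 y\|^2 + \|P_2 y\|^2 = \|y\|^2,$$
so $\|x\| = \|y\|$, which is exactly norm retrieval.

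Finally, I would emphasize what makes the statement worth recording: two \emph{arbitrary} subspaces need not do norm retrieval, so the content is precisely that orthogonality forces the squared norm to split additively across the two projections. This should also be contrasted with the phase retrieval criterion of Theorem \ref{T14}, which demands $span\{P_i x\} = \RR^n$ for every nonzero $x$; norm retrieval is cheap enough to be achieved by just two complementary subspaces, and the main ``difficulty'' is resisting the temptation to look for a harder construction than Pythagoras requires.
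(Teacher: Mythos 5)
Your proposal is correct and is exactly the paper's approach: the paper's proof consists of the single line ``Choose a subspace $W$ and $W^{\perp}$,'' and your Pythagorean-identity verification simply supplies the details that the paper leaves implicit.
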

\begin{proof}
Choose a subspace $W$ and $W^{\perp}$.
\end{proof}

We know phase retrievability by vectors is preserved by invertible operators, but phase retrievability by higher dimensional subspaces is not
   preserved by invertible operators in general  by  \cite{CCJLW15}. Also  We know norm retrievability by vectors is not preserved by invertible operators in general \cite{CGJT17}. The case for subspaces is much more complicated.
\begin{theorem}
If $\{W_1,W_2\}$ is a fusion frame for $\RR^n$ then one of the following holds:
\begin{enumerate}
\item If $W_1\cap W_2=\{0\}$, then there is an invertible operator $T$ on $\RR^n$ so that $\{TW_1,TW_2\}$ does norm retrieval.
\item If $W_1\cap W_2=W_3\not= \{0\}$ then there is no invertible operator $T$ on $\RR^n$ so that $\{TW_1,TW_2\}$ does norm retrieval.
\end{enumerate}
\end{theorem}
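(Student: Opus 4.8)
The plan is to handle the two alternatives by completely different mechanisms: in (1) I want to build one good $T$ by a change of basis, while in (2) I want to show \emph{every} $T$ fails by reducing to a fixed-pair statement, using that an invertible map commutes with the lattice operations on subspaces. Throughout I will use that a fusion frame forces $W_1+W_2=\RR^n$ (if $x\perp W_1$ and $x\perp W_2$ then $P_1x=P_2x=0$, so the lower fusion-frame bound forces $x=0$).

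For (1), $W_1\cap W_2=\{0\}$ together with $W_1+W_2=\RR^n$ gives the internal direct sum $\RR^n=W_1\oplus W_2$. I would choose a basis $u_1,\dots,u_k$ of $W_1$ and $u_{k+1},\dots,u_n$ of $W_2$, whose union is a basis of $\RR^n$, and define the invertible operator $T$ by $Tu_i=e_i$. Then $TW_1=\mathrm{span}\{e_1,\dots,e_k\}$ and $TW_2=\mathrm{span}\{e_{k+1},\dots,e_n\}$ are orthogonal complements of one another, so $\{TW_1,TW_2\}$ does norm retrieval by Theorem \ref{T: t1}. There is no real obstacle here.

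For (2), the key preliminary is that for invertible $T$ one has $TW_1\cap TW_2=T(W_1\cap W_2)$, $TW_1+TW_2=\RR^n$, and $TW_i=\RR^n$ iff $W_i=\RR^n$; hence the images $V_i:=TW_i$ are again proper subspaces that span, with $V_1\cap V_2=T(W_3)\neq\{0\}$. (Here one must assume $W_1,W_2\neq\RR^n$: if, say, $W_1=\RR^n$ then $P_1=I$ and $\{P_1,P_2\}$ trivially does norm retrieval, so this hypothesis is genuinely needed.) Thus it suffices to prove the fixed-pair statement: any two proper subspaces $V_1,V_2$ with $V_1+V_2=\RR^n$ and $V_1\cap V_2\neq\{0\}$ fail norm retrieval; applying this to $V_i=TW_i$ for arbitrary invertible $T$ yields (2). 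To exhibit the failing pair $x,y$ I would split on whether $V_1^{\perp}\subseteq V_2$. If $V_1^{\perp}\not\subseteq V_2$, pick $0\neq v\in V_1^{\perp}\setminus V_2$; then $P_1v=0$, so $\mathrm{span}\{P_1v,P_2v\}=\mathrm{span}\{P_2v\}\subseteq V_2$ misses $v$. Letting $u$ be the component of $v$ orthogonal to that span (so $u\neq0$ and $u\perp P_iv$), the choice $x=u+v$, $y=u-v$ gives $\|P_ix\|^2=\|P_iu\|^2+\|P_iv\|^2=\|P_iy\|^2$ while $\|x\|^2-\|y\|^2=4\langle u,v\rangle=4\|u\|^2\neq0$. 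If instead $V_1^{\perp}\subseteq V_2$ (equivalently $V_2^{\perp}\subseteq V_1$), then $V_2^{\perp}$, $V_1^{\perp}$ and $V_1\cap V_2$ are mutually orthogonal and all nonzero, with $V_2^{\perp}\subseteq V_1$ and $V_1^{\perp}\subseteq V_2$; choosing unit vectors $z\in V_1\cap V_2$, $a\in V_2^{\perp}$, $b\in V_1^{\perp}$ and setting $x=z$, $y=a+b$ gives $\|P_ix\|=1=\|P_iy\|$ for $i=1,2$ but $\|x\|=1\neq\sqrt2=\|y\|$.

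The inner-product verifications are routine; the conceptual crux is the second sub-case. There the ``one vector at a time'' test (producing some $v\notin\mathrm{span}\{P_1v,P_2v\}$) genuinely fails, because the configuration $V_1^{\perp}\subseteq V_2$ looks locally like a subspace and its orthogonal complement, for which $v=P_1v+P_2v$ always lies in the span. The hard part is recognizing that this degenerate alternative is precisely the obstruction to the simple argument, and that the hypothesis $V_1\cap V_2\neq\{0\}$ is exactly what rescues it: it supplies the extra unit vector $z$ needed to assemble the rank-two counterexample $x=z$, $y=a+b$, where properness guarantees $a\in V_2^{\perp}$ and $b\in V_1^{\perp}$ are available. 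I expect sorting the cases correctly, rather than any single computation, to be the main difficulty.
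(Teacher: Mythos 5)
Your proof is correct; your part (1) matches the paper's argument, but your part (2) takes a genuinely different route. The paper also reduces to a two-case analysis, but splits along a different axis: writing $W_1'$ and $W_2'$ for the orthogonal complements of $TW_3$ inside $TW_1$ and $TW_2$, it treats separately $W_1'\perp W_2'$ and $W_1'\not\perp W_2'$. In the orthogonal case it picks orthonormal $x_1\in W_1'$, $x_2\in TW_3$, $x_3\in W_2'$ and tests the pair $y_1=x_1+2x_2+x_3$, $y_2=2x_1+x_2+2x_3$; your degenerate-case pair $x=z$, $y=a+b$ plays exactly this role and is simpler. The real payoff of your route is the generic case: in its non-orthogonal case the paper chooses a unit $x\in W_1'$ with $P_2x\neq 0$ and a unit $z\in x^{\perp}\cap (P_2x)^{\perp}$ and then asserts $P_1(x+z)=x$ and $P_2(x+z)=P_2x$; those identities actually require $P_1z=P_2z=0$, i.e. $z\in (TW_1)^{\perp}\cap (TW_2)^{\perp}=\{0\}$ (the images still sum to $\RR^n$), so the paper's argument in that case has a genuine gap. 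Your Case A --- from $v\in V_1^{\perp}\setminus V_2$ build $u$ orthogonal to $\mathrm{span}\{P_1v,P_2v\}$ and test $u\pm v$ --- is sound and covers precisely that situation; it is in effect the principle that norm retrieval forces $x\in\mathrm{span}\{P_1x,P_2x\}$ for every $x$, and notably it never uses $V_1\cap V_2\neq\{0\}$, which enters only in your Case B. Finally, your observation that one must assume $W_1,W_2\neq\RR^n$ is right: conclusion (2) is false otherwise (if $W_1=\RR^n$ then $P_{TW_1}=I$ retrieves norms trivially), and the paper tacitly needs this too, since its orthogonal case requires unit vectors in both $W_1'$ and $W_2'$. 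Your explicit reduction via $TW_1\cap TW_2=T(W_1\cap W_2)$ and $TW_1+TW_2=\RR^n$ is implicit in the paper.
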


\begin{proof}
 Let $\{e_i\}_{i=1}^n$ be an orthonormal basis for $\RR^n$.
 \vskip12pt
 (1)
In this case, there exists an invertible operator $T$ on $\RR^n$ so that $TW_1=span\{e_i\}_{i=1}^k$ and $TW_2=span\{e_i\}_{i=k+1}^n$.
Clearly $\{TW_i\}_{i=1}^2$ does norm retrieval.
\vskip12pt
(2) Let $T$ be an invertible operator on $\RR^n$. Let $W_1'$ be the orthogonal complement of $TW_3$ in  $TW_1$ and
$W_2'$ the orthogonal complement of $TW_3$ in $TW_2$.
\vskip12pt
We examine two cases.
\vskip12pt

\noindent {\bf Case 1}: $W_1'\perp W_2'$.

Then there is an orthogonal set of vectors $x_1,x_2,x_3$ with
$\|x_i\|=1$ for $i=1,2,3$, $x_1\in W_1'$,  $x_2\in TW_3$, and $x_3\in W_2'$. Define
\[ y_1=x_1+2x_2+x_3 \mbox{ and } y_2=2x_1+x_2+2x_3.\]
Let $\{P_i\}_{i=1}^2$ project onto $\{TW_i\}_{i=1}^2$.
Then
\[ \|P_1y_1\|^2=1^2+2^2=2^2+1^2=\|P_1y_2\|^2.
\]
Also,
\[ \|P_2y_2\|^2= 2^2+1^2=1^2+2^2=\|P_2y_2\|^2.\]
But
\[ \|y_1\|^2= 1^2+2^2+1^2=6\mbox{ and }\|y_2^2\|^2=2^2+1^2+2^2=9.\]
So norm retrieval fails.
\vskip12pt
\noindent {\bf Case 2:} $W_1'$ is not orthogonal to $W_2'$.

Then there is a vector $x\in W_1'$ with $\|x\|=1$ and $0\not= P_2x\in W_2'$.
Now
\[ \{y:P_1y=P_1x=x\}=\{y=x+z: z\perp x\},\]
and
\[ \{y:P_2y=P_2x\}=\{y=P_2x+z:z\perp P_2x\}.\]
Since our assumptions imply $n\ge 3$, we have a $\|z\|=1$ with $z\in x^{\perp}\cap (P_2x)^{\perp}$. Now,
$x=P_1x=P_1(x+z)$ and $P_2x=P_2(x+z)$. Hence
\[ \|x\|=\|P_1x\|=\|P_2(x+z)\|\mbox{ and } \|P_2x\|=\|P_2(x+z)\|.\]
But
\[ \|x+z\|^2=\|x\|^2+\|z\|^2 \not= \|x\|^2\mbox{ since }z\not= 0.\]
So norm retrieval fails.
\end{proof}
\begin{theorem} \label{T:C1}
  \cite{CCJLW15} Let $\{P_i\}_{i=1}^m$ be projections onto  subspaces $\{W_i\}_{i=1}^m$ of $\mathbb{R}^n$. Then the following are equivalent:
  \begin{enumerate}
\item $\{W_i\}_{i=1}^m$ does norm retrieval.
\item For any orthonormal basis $\{\phi_{ij}\}_{j=1}^{I_i}$  of $W_i$, then the collection of vectors $\{\phi_{ij}\}_{j=1,i=1}^{I_i,m}$
do norm retrieval.
\end{enumerate}
\end{theorem}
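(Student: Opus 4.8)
The plan is to reduce both implications to the elementary identity that expresses a projection in terms of an orthonormal basis of its range. If $\{\phi_{ij}\}_{j=1}^{I_i}$ is an orthonormal basis of $W_i$, then $P_i x = \sum_{j=1}^{I_i}\langle x,\phi_{ij}\rangle\phi_{ij}$, and hence by Parseval
\[ \|P_i x\|^2 = \sum_{j=1}^{I_i}|\langle x,\phi_{ij}\rangle|^2, \qquad \langle P_i x, P_i y\rangle = \sum_{j=1}^{I_i}\langle x,\phi_{ij}\rangle\langle y,\phi_{ij}\rangle. \]
Everything below is read off from these two formulas, together with the fact that $\langle x,\phi_{ij}\rangle = \langle P_i x,\phi_{ij}\rangle$ because $\phi_{ij}\in W_i$.

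The implication $(1)\Rightarrow(2)$ is the routine direction and holds for every orthonormal basis simultaneously. First I would assume $\{W_i\}_{i=1}^m$ does norm retrieval and take $x,y$ with $|\langle x,\phi_{ij}\rangle| = |\langle y,\phi_{ij}\rangle|$ for all $i,j$. Squaring and summing over $j$ in the first identity gives $\|P_i x\|^2 = \|P_i y\|^2$ for each $i$, so $\|x\| = \|y\|$ by norm retrieval of the projections. Thus the vectors $\{\phi_{ij}\}$ do norm retrieval.

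The implication $(2)\Rightarrow(1)$ is where the content lies, and the main obstacle is that the hypothesis $\|P_i x\| = \|P_i y\|$ only equates the sums $\sum_j|\langle x,\phi_{ij}\rangle|^2$ and $\sum_j|\langle y,\phi_{ij}\rangle|^2$, not the individual coefficients, so it cannot be inserted into the vector hypothesis as it stands. The way around this is to exploit the phrase ``for any orthonormal basis'' and tailor the basis to the pair being tested. Writing $u = x+y$ and $v = x-y$, a one-line computation turns $\|P_i x\| = \|P_i y\|$ into $\langle P_i u, P_i v\rangle = 0$ and turns the desired conclusion $\|x\| = \|y\|$ into $\langle u,v\rangle = 0$; thus the working assumption becomes that $P_i u$ and $P_i v$ are orthogonal vectors sitting inside $W_i$. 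I would then choose, for each $i$, an orthonormal basis of $W_i$ whose first member is $P_i u/\|P_i u\|$ and whose second is $P_i v/\|P_i v\|$ (discarding whichever is zero), completing arbitrarily; the completing vectors are automatically orthogonal to both $P_i u$ and $P_i v$. For this basis every $\phi_{ij}$ is orthogonal to at least one of $P_i u, P_i v$, so $\langle u,\phi_{ij}\rangle\langle v,\phi_{ij}\rangle = \langle x,\phi_{ij}\rangle^2 - \langle y,\phi_{ij}\rangle^2 = 0$, that is $|\langle x,\phi_{ij}\rangle| = |\langle y,\phi_{ij}\rangle|$, for all $i,j$. Applying hypothesis (2) to this particular basis forces $\|x\| = \|y\|$, which is exactly (1). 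The only point to verify with care is that the adapted basis really does make each product vanish, but this is immediate from the orthogonality $P_i u\perp P_i v$ and the construction.
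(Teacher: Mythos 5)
Your proof is correct; note, though, that the paper itself gives no argument for this statement at all --- it is imported verbatim from \cite{CCJLW15} with a citation --- so there is no internal proof to compare against. Judged on its own, your argument is sound and self-contained. The direction $(1)\Rightarrow(2)$ is the standard Parseval computation $\|P_i x\|^2=\sum_{j}|\langle x,\phi_{ij}\rangle|^2$, and your direction $(2)\Rightarrow(1)$ correctly isolates the real content: the hypothesis $\|P_ix\|=\|P_iy\|$ does not give coefficientwise equality for an arbitrary basis, so one must use the universal quantifier over bases and adapt the basis to the pair $(x,y)$. Your polarization step is exactly right: with $u=x+y$, $v=x-y$ one has $\langle P_iu,P_iv\rangle=\|P_ix\|^2-\|P_iy\|^2=0$ and $\langle u,v\rangle=\|x\|^2-\|y\|^2$, so building an orthonormal basis of $W_i$ from $P_iu/\|P_iu\|$ and $P_iv/\|P_iv\|$ (discarding zero vectors, completing orthogonally) makes every $\phi_{ij}$ orthogonal to at least one of $P_iu,P_iv$, whence $\langle x,\phi_{ij}\rangle^2-\langle y,\phi_{ij}\rangle^2=\langle u,\phi_{ij}\rangle\langle v,\phi_{ij}\rangle=\langle P_iu,\phi_{ij}\rangle\langle P_iv,\phi_{ij}\rangle=0$ for all $i,j$; hypothesis $(2)$ applied to this particular basis then gives $\|x\|=\|y\|$. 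The degenerate cases ($P_iu=0$ or $P_iv=0$) are handled trivially as you indicate, and the identity $\langle x,\phi_{ij}\rangle=\langle P_ix,\phi_{ij}\rangle$ used throughout is justified since $P_i$ is self-adjoint and fixes $\phi_{ij}$. This is essentially the argument one finds in the cited source, so your reconstruction is both correct and faithful to the standard proof of the result.
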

In light of above Theorem, it is natural to ask if we change the orthonormal basis condition with a Riez basis that need not have orthogonal
elements? The answer is negative. we construct a counterexample similar to phase retrieval case \cite{CCJLW15}.
\begin{example}
  Let $\{e_i\}_{i=1}^3$ be an orthonormal basis for $\mathbb{R}^3$. Define the subspaces
  $$W_1=[e_1,e_2], \: W_2=[e_2], \: W_3=[e_3]$$
  $$W_4=[\frac{e_1+e_2}{2}], \: W_5=[\frac{e_2+e_3}{2}], \: W_6=[\frac{e_1+e_3}{2}]$$
  Then $\{W_i\}_{i=1}^6$ can do phase (norm) retrieval in $\mathbb{R}^3$.
  However if we choose not orthonormal Riesz basis $\{e_1+e_2, e_2\}$ for $W_1$ and the spanning element from the other subspaces, we get the set of vectors $\{e_1+e_2, e_2,e_3, e_2+e_3,e_1+e_3\}$. Notice that this set cannot do norm retrieval in $\mathbb{R}^3$, since if we consider two partitions $\{e_2,e_3, e_2+e_3\}$ and $\{e_2+e_3,e_1+e_3\}$ then $e_1 \in \{e_2,e_3,e_2+e_3\}^{\perp}$ and $e_1-e_2-e_3 \in \{e_2+e_3,e_1+e_3\}^{\perp}$ but $\langle e_1,e_1-e_2-e_3 \rangle \ne 0$
\end{example}

  Similar to the vector case,  norm retrievability by higher dimensional subspaces is not preserved by invertible operators.
    \begin{example}
   We know every subspace plus its perpendicular subspace do norm retrieval in $\mathbb{R}^n$. Then the subspaces
   $\{W_i\}_{i=1}^2=\{[e_1],[e_2,e_3]\}$ do norm retrieval for $\mathbb{R}^3$. Define the invertible operator T on the basis elements by:
  $$ Te_i=
\begin{cases}
  e_1-e_2   & \mbox{if } i=1 \\
  e_2      & \mbox{if } i=2 \\
  e_3      & \mbox{if} \: i=3
\end{cases} $$
Then we have
$TW_1=[e_1-e_2], \: TW_2=[e_2,e_3]$ and $\{TW_1\}_{i=1}^2$ do not norm retrieval in $\mathbb{R}^3$ since by \cite{CGJT17} two 2- dimensional subspaces that are not perpendicular, cannot do norm retrieval in  $\mathbb{R}^3$.
\end{example}

\section{Open Problems}

It is known that the phase retrievable frames are not dense in all frames.

\begin{problem}
Show that the weak phase retrievable frames with 2n-2 vectors in $\RR^n$ are not dense in all frames with 2n-2 vectors.
\end{problem}

We have examples of weak phase retrievable frames 
 with 2n-2 vectors for n=3,4 \cite{ACGJT2016}.

\begin{problem}
Show for every $n\ge 5$ that there are weak phase retrievable frames $\{x_i\}_{i=1}^{2n-2}$ for $\RR^n$.
\end{problem}

We know that Phase retrieval in $\RR^n$ is possible using $2n-1$ subspaces each of any dimension less than $n-1$
\cite{CCPW13}.

\begin{problem}
Is weak phase retrieval in $\RR^n$ possible using $2n-2$ (or even $2n-3$) subspaces each of any dimension less than $n$.
\end{problem}

\end{document}